\newtheorem{theorem}{Theorem}[section]
\newtheorem{lemma}[theorem]{Lemma}
\newtheorem{proposition}[theorem]{Proposition}
\newtheorem{conjecture}[theorem]{Conjecture}
\theoremstyle{definition}
\newtheorem{definition}[theorem]{Definition}
\theoremstyle{remark}
\newtheorem{remark}[theorem]{Remark}
\newtheorem{question}[theorem]{Question}
\numberwithin{equation}{section}
\DeclareMathOperator{\dist}{dist}
\DeclareMathOperator{\conv}{conv}
\DeclareMathOperator{\lk}{lk}
\newcommand{\const}{{\rm const}}
\newcommand{\inte}{\mathop{\rm int}}
\newcommand{\cl}{\mathop{\rm cl}}
\renewcommand{\epsilon}{\varepsilon}
\renewcommand{\phi}{\varphi}
\renewcommand{\kappa}{\varkappa}
\newcommand{\gdiv}{\,\frac{\, *\, }{}\,}
\begin{document}

\title{Colorful theorems for strong convexity}

\author{Andreas~F.~Holmsen{$^\spadesuit$}}

\address{\linebreak Andreas~F.~Holmsen \hfill \hfill \linebreak  
Department of Mathematical Sciences,
KAIST,\hfill \hfill \linebreak 291 Daehak-ro, Daejeon 305-701, South Korea} 

\email{andreash@kaist.edu}

\thanks{{$^\spadesuit$}Supported by Swiss National Science Foundation Grants 200020-144531 and 200021-137574}

\author{Roman~Karasev{$^\clubsuit$}}

\thanks{{$^\clubsuit$}Supported by the Russian Foundation for Basic Research grants 15-31-20403 (mol\_a\_ved) and 15-01-99563 (A)}
\thanks{{$^\clubsuit$}Supported by ERC Advanced Research Grant No.~267195 (DISCONV)}

\address{Roman Karasev. \hfill \hfill \linebreak
Moscow Institute of Physics and Technology, Institutskiy per. 9, Dolgoprudny, Russia 141700; and 
\hfill \hfill \linebreak 
Institute for Information Transmission Problems RAS, Bolshoy Karetny per. 19, Moscow, Russia 127994}

\email{r\_n\_karasev@mail.ru}
\urladdr{http://www.rkarasev.ru/en/}

\subjclass[2010]{52A35, 52A20} \keywords{Carath\'{e}odory's theorem, Helly's theorem,
strong convexity}

\begin{abstract}
We prove two colorful Carath\'eodory theorems for strongly convex hulls, generalizing the colorful Carat\'eodory theorem for ordinary convexity by Imre B\'ar\'any, the non-colorful Carath\'eodory theorem for strongly convex hulls by the second author, and the ``very colorful theorems'' by the first author and others. We also investigate if the assumption of a ``generating convex set'' is really needed in such results and try to give a topological criterion for one convex body to be a Minkowski summand of another.
\end{abstract}

\maketitle

\section{Introduction}
The colorful Carath{\'e}odory theorem, discovered by Imre B{\'a}r{\'a}ny \cite{ba1982} (and independently in a dual form by Lov{\'a}sz), states that if $X_0$, $X_1$, $\dots$, $X_n$ are subsets in $\mathbb{R}^n$, each containing the origin in their convex hulls, then there exists a system of representatives $x_0\in X_0$, $x_1\in X_1$, $\dots$, $x_n\in X_n$ such that the origin is contained in the convex hull of $\{x_0, x_1, \dots, x_n\}$. 

The classical Carath{\'e}odory theorem~\cite{car1911} is recovered by setting $X_0 = X_1 = \cdots = X_n$. There are several remarkable applications of the colorful Carath{\'e}odory theorem in discrete geometry, such as in Sarkaria's proof of Tverberg's theorem and in the proof of the existence of weak $\epsilon$-nets for convex sets. For further applications and references we recommend the reader to see Chapters 8, 9, and 10 in \cite{matousek}. We recommend the survey~\cite{eckhoff1993} for general background on Carath\'eodory's theorem and its relatives. 

Recently there have been numerous generalizations of classical results from combinatorial convexity which focus on replacing convex sets by more general subsets of $\mathbb{R}^n$ which are subject to certain topological constraints (see for instance \cite{xavi-adv, xavi-arx, km2005, KMleray, montejano} and references therein). These generalizations usually require tools from algebraic topology.  In this paper we also use such topological tools, but we do so in order to solve problems which are purely affine. 

Our goal in this paper is to give an extension of the colorful Carath\'eodory theorem~\cite{ba1982} to the notion of strong convexity and strongly convex hulls. This also generalizes the (non-colorful) Carath\'eodory theorem for strong convexity from~\cite{kar2001car}. In fact, the proof presented here is very similar to that in~\cite{kar2001car}, using the colorful topological Helly theorem~\cite{km2005} instead of the original topological Helly theorem (see for instance \cite{debrunner}); but we reproduce some of its parts because~\cite{kar2001car} appears exclusively in Russian.  

In order to state our results, we need some definitions on strong convexity from~\cite{pol1996,balashov2000}. Similar notions have appeared earlier, and we recommend the reader to~\cite{pol1996,balashov2000} for further references.

\begin{definition}
\label{definition:genset}
A convex body $K$ is a \emph{generating set} if any nonempty intersection of its translates
$$
K\gdiv T = \bigcap_{t\in T} (K - t)
$$
is a Minkowski summand of $K$, that is, $(K\gdiv T) + T' = K$ for some convex compactum $T'$.
\end{definition}

In~\cite{kar2001gen} it was shown that it is sufficient to test this property for those $T$ that contain two elements; but we do not need this simplification here. It is relatively easy to check that all two-dimensional convex bodies, Euclidean balls, and simplices in every dimension are generating sets (see for instance section 3.2 of \cite{schneider}). This property is also inherited under the Cartesian product operation. In~\cite{ivanov2007} a criterion for checking this property for $C^2$-smooth bodies was given, proving, in particular, that by sufficiently smooth perturbations of a ball one can obtain centrally symmetric generating sets which are not ellipsoids.

\begin{definition}
\label{definition:strongconv}
Let $K\subset \mathbb R^n$ be a convex body. A set $C\subset \mathbb R^n$ is called {\em $K$-strongly convex} if it is an intersection of translates of $K$, that is 
$$
C = K\gdiv T
$$
in the above notation. The minimal $K$-strongly convex set containing a given set $X$ is called its \emph{strongly convex hull}, and can be found by the following formula
$$
\conv_K X = K\gdiv (K\gdiv X).
$$
\end{definition}

Note that the $K$-strongly convex hull is only defined for those $X$ that are contained in a translate of $K$.

In~\cite{kar2001car} it was shown, assuming that $K$ is a generating set, that the strongly convex hull of $X$ equals the union of the strongly convex hulls of all subsets $X'\subset X$ such that $|X'|\le n+1$; some particular cases of this were already proved in~\cite{balashov2000}. This is the Carath\'eodory theorem for strong convexity. Here we establish the colorful Carath\'eodory theorem for strong convexity. In fact, it is possible to state it without appealing to Definition~\ref{definition:strongconv}. Let us say that a translate $K-t$ \emph{separates} a set $S$ from the origin if $K-t$ contains the set $S$ and is disjoint from the origin.

\begin{theorem} 
\label{theorem:colorfulcovering}
Let $X_0, X_1,\ldots, X_n\subset \mathbb R^n$ be finite sets and $K$ a generating set in $\mathbb R^n$. Suppose every system of representatives $x_0\in X_0$, $x_1\in X_1$, $\ldots$, $x_n\in X_n$ can be separated from the origin by a translate $K-t$. Then there exists a translate of $K$ that separates $X_i$ from the origin, for some $0\leq i  \leq n$.
\end{theorem}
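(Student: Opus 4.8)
The plan is to recast the statement as an instance of the colorful topological Helly theorem, working in the space of translation vectors. Observe that a translate $K-t$ contains a set $S$ precisely when $t\in K\gdiv S=\bigcap_{x\in S}(K-x)$, and that $K-t$ misses the origin precisely when $t\notin K$. Hence, writing $U_x:=(K-x)\setminus K$ for each point $x$, the translates separating $S$ from the origin are exactly the points of $\bigcap_{x\in S}U_x=(K\gdiv S)\setminus K$. With the color families $\mathcal F_i:=\{U_x:x\in X_i\}$, the hypothesis says that every rainbow selection $U_{x_0},\dots,U_{x_n}$ (with $x_i\in X_i$) has a common point, while the desired conclusion says that some single family $\mathcal F_i$ has a common point. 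This is exactly the configuration to which the colorful topological Helly theorem \cite{km2005} applies in $\mathbb R^n$.

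To invoke \cite{km2005} I must check its good-cover hypothesis: that every intersection $\bigcap_{x\in S}U_x=(K\gdiv S)\setminus K$ is either empty or contractible (so that the nerve of the $U_x$ is $n$-Leray). This reduces the whole theorem to the following key lemma: for a generating set $K$ and any finite $S$, the set $(K\gdiv S)\setminus K$ is empty or contractible. I expect this to be the main obstacle, and it is precisely where the generating hypothesis is used.

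For the key lemma, set $C:=K\gdiv S$ and assume $C\setminus K\neq\varnothing$. Since $K$ is generating and $C$ is a nonempty intersection of translates of $K$, it is a Minkowski summand: $C+T'=K$ for some convex compactum $T'$. I would use this summand relation to control the relative position of the two convex bodies $C$ and $K$: it forces $\partial K$ to cut $C$ in a single ``cap,'' so that $C\setminus K$ deformation retracts onto a contractible portion of $\partial C$, exploiting the supporting-translate property of strongly convex sets (every boundary point of a $K$-strongly convex body lies on a translate of $K$ containing that body). This is essentially the contractibility argument of \cite{kar2001car}, which I would reproduce here. The role of the generating assumption is essential: without it, $C\setminus K$ may be disconnected or carry nontrivial topology, and then the good-cover property --- and with it the entire approach --- breaks down.

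Finally, I would attend to the point-set technicalities needed to fit the framework of \cite{km2005}: one replaces each $U_x$ by a suitable open version (for instance $\inte(K-x)\setminus K$, or a small open neighborhood within $K\gdiv S$) chosen so that all nonempty intersections retain the same homotopy type, and checks that separability of the finite sets $X_i$ is unaffected by this replacement, via a compactness or genericity argument. Granting this, the colorful topological Helly theorem produces an index $i$ with $\bigcap_{x\in X_i}U_x\neq\varnothing$; any $t$ in this intersection gives a translate $K-t$ containing $X_i$ and avoiding the origin, which is the desired separating translate.
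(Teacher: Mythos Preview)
Your approach is essentially identical to the paper's: the same translation to the sets $U_x=(K-x)\setminus K$, the same reduction via the generating property to the key lemma (Lemma~\ref{lemma:acyclic} in the paper), and the same appeal to the colorful topological Helly theorem of \cite{km2005}. Two small refinements worth noting: the paper establishes only \emph{acyclicity} (in \v{C}ech cohomology) of $(K\gdiv S)\setminus K$ rather than contractibility, which is all the nerve theorem requires and is easier to push through the limiting reductions to the smooth case; and rather than replacing the $U_x$ by open variants, the paper simply regards them as \emph{closed} subsets of the manifold $\mathbb R^n\setminus K$ and invokes the nerve comparison for closed covers there, which sidesteps the compactness/genericity patch you anticipate.
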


This can be related to Definition~\ref{definition:strongconv} in the following way: The hypothesis means that the origin is not contained in the $K$-strongly convex hull of any system of representatives $\{x_0, x_1,\ldots, x_n\}$, and the conclusion means that the origin is not contained in the $K$-strongly convex hull of one of the $X_i$'s. This precisely generalizes the colorful Carath\'eodory theorem~\cite{ba1982} to strong convexity, of course, assuming that $K$ is a generating set.

\medskip

Throughout the paper we will be working with topological spaces that are point sets in Euclidean spaces and use their \v{C}ech cohomology. 

We denote the \v{C}ech cohomology of $X$ with coefficients in the constant sheaf $\mathbb Z$ simply by $H^*(X)$. We choose \v{C}ech cohomology because of its \emph{continuity property}, which we will use in the following form: If $Y\subseteq X$ is closed then $H^*(Y)$ is the inverse limit of $H^*(U)$ over all open neighborhoods $U\supseteq Y$. 

Note also that in our (paracompact) case, \v{C}ech cohomology coincides with the sheaf cohomology (see Theorem 5.10.1 in \cite{godement1958}), this observation is frequently used in this paper. Let us make an explicit definition:

\begin{definition}
We say that $X$ is \emph{acyclic} if $H^0(X) = \mathbb Z$ and $H^k(X)$ is zero for $k>0$.
\end{definition}

Note that every contractible subset of $\mathbb R^n$ is acyclic, but the inverse is not true. For example, the closed topologist's sine curve, which shows that connectivity does not imply arcwise connectivity, is acyclic and demonstrates that acyclicity does not imply arcwise connectivity, and therefore does not imply contractibility.

\medskip

{\em Outline of the paper.} The proof of Theorem \ref{theorem:colorfulcovering} is given in Section~\ref{section:proof1} and uses the topological colorful Helly theorem due to Kalai and Meshulam \cite{km2005}. However, in order to apply their result we need a crucial lemma concerning the topology of sets of the form $K\setminus (K+T)$ where $K$ and $T$ are arbitrary convex compacta in $\mathbb{R}^n$. This is given in Lemma \ref{lemma:acyclic}, below. In Section~\ref{section:more} we prove a ``very colorful'' version of Theorem \ref{theorem:colorfulcovering} (see Theorem~\ref{theorem:strong-caratheodory} for the precise statement). This generalization also uses Lemma \ref{lemma:acyclic}, but needs some further topological machinery, which is given in Section~\ref{section:leray-link}. In Section~\ref{section:no-cara} we give a construction of a convex body $K$ in $\mathbb{R}^3$ for which the $K$-strongly convex hull has an unbounded Carath{\'e}dory number. This shows that it is necessary to make some assumption (such as the ``generating set'' property) on the convex body $K$ in our results. In Section \ref{section:topo} we establish a partial converse to the crucial Lemma \ref{lemma:acyclic} by giving a topological criterion for a convex body $K$ to be a Minkowski summand of an open bounded convex set $T$.

\subsection*{Acknowledgments.}
The authors thank Alexey Volovikov for his explanations about the topological facts used in Section~\ref{section:topo}, Imre B\'ar\'any for the discussion which resulted in the example in Section~\ref{section:no-cara}, and the unknown referee for numerous useful remarks.

\section{Proof of Theorem~\ref{theorem:colorfulcovering}}
\label{section:proof1}






We are going to utilize the colorful topological Helly theorem from~\cite{km2005} in the following form: Assume ${\mathcal F}_0, {\mathcal F}_1, \ldots, {\mathcal F}_n$ are finite families of subsets of $\mathbb R^n$ such that the intersection of the members of any subfamily $\mathcal G\subset \bigcup_i \mathcal F_i$ is either empty or acyclic, and every intersection $\bigcap_{i=0}^n F_i$ of a system of representatives $F_0\in \mathcal F_0, F_1\in \mathcal F_1, \ldots, F_n\in \mathcal F_n$ is nonempty. Then for some $i$ the intersection $\bigcap_{F \in \mathcal F_i}F$ is nonempty.

The above result is valid in the case when all the sets $F\in \bigcup_i \mathcal F_i$ are open. This case can be deduced from the combinatorial formulation in~\cite{km2005} by comparing the cohomology of the union of the family with the cohomology of its nerve. But we are going to apply this result to sets $F$ of the form $X\setminus Y$, where $X$ and $Y$ are convex bodies, $Y$ is fixed and $X$ depends on $F$, call it $X_F$. Such sets are neither open nor closed in general, so 
some justification is needed.
First, we go to the manifold $\mathbb R^n\setminus Y$ and consider the sets $F=X_F\setminus Y$ as its closed subsets. The corollary after~\cite[Theorem 5.2.4]{godement1958} asserts that the \v{C}ech cohomology of the union of the family $\mathcal F$ can be calculated using the nerve of the family; so we can reduce the problem to studying the nerve combinatorially.

The next ingredient is the following lemma from~\cite{kar2001car}:


\begin{lemma}
\label{lemma:acyclic}
For two convex compacta $K, T\subset\mathbb R^n$ the set $K\setminus (K+T)$ is either empty or acyclic.
\end{lemma}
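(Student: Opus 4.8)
The plan is to prove the stronger statement that $M := K\setminus(K+T)$ is contractible whenever it is nonempty, and to do so by an explicit deformation retraction built from the metric projection onto the convex body $K+T$. First I would dispose of the trivial situations: if $0\in T$ then $K\subseteq K+T$ and $M=\emptyset$, so I may assume $M\neq\emptyset$, which forces $0\notin T$; since $T$ is a convex compactum this yields a direction $u$ with $\langle u,t\rangle\ge\delta>0$ for all $t\in T$. As a warm-up and as the template for the general argument, consider the single-translate case $T=\{t\}$. Here $x\in M$ means $x\in K$ and $x-t\notin K$, so along the line $s\mapsto x-st$ the parameters landing in $K$ form an interval $[s_-,s_+]$ containing $0$ with $s_+<1$; one checks that the whole segment $s\in[0,s_+]$ stays in $M$ and terminates on $\partial K$. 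Sliding in the direction $-t$ therefore deformation retracts $M$ onto a cap of $\partial K$ (a graph over a convex base), which is contractible.

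For general $T$ the difficulty is that no single linear direction works, so I would replace the linear slide by the gradient flow of $\phi(x):=\dist(x,K+T)$. The key point is that projection onto the convex body $K+T$ is single-valued, so $\phi$ is continuously differentiable on the open set $(K+T)^c$ with $\nabla\phi(x)$ equal to the outward unit normal of $K+T$ at the foot point $\pi(x)$; consequently $(K+T)^c$ is foliated by the outward normal rays $r\mapsto \pi(x)+r\,n(\pi(x))$, and this foliation identifies $(K+T)^c$ with $\partial(K+T)\times(0,\infty)$. Flowing outward along these rays never re-enters the convex set $K+T$, so the flow preserves $M$ until it exits $K$. Stopping each trajectory at its exit point on $\partial K$ gives a deformation retraction of $M$ onto the \emph{exit cap} $E\subseteq\partial K$; equivalently, reading $M$ through the foliation, each fiber $\{r:\pi(x)+rn\in K\}$ is an interval, so $M$ is homotopy equivalent to the base set $P\subseteq\partial(K+T)$ of boundary points whose outward normal ray meets $K$.

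It then remains to show that $P$ (equivalently $E$) is acyclic. Here I would use the separating direction: a short support-function computation shows that if the outward normal ray at the support point $q_K(n)+q_T(n)$ of $K+T$ is to meet $K$ at positive distance, then necessarily $h_T(n)<0$. Thus all relevant normal directions lie in the spherically convex cap $\hat N=\{n\in S^{n-1}:h_T(n)<0\}$, which is contained in the open hemisphere $\{\langle n,t\rangle<0\}$ for any fixed $t\in T$ and is therefore contractible; within $\hat N$ the condition ``the normal ray meets $K$'' should cut out a contractible subregion by the joint convexity of $K$ and $K+T$.

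The main obstacle is precisely this last step: proving that the exit cap is topologically trivial for an \emph{arbitrary} convex compactum $T$, not merely a single translate, is where the convexity of both bodies and the Minkowski-summand relation $K+T\supseteq K+t$ must be used in an essential way; removing an arbitrary convex subset from a convex body can easily disconnect it, so the argument cannot be purely formal. A secondary technical point, to be handled using the stated continuity property of \v{C}ech cohomology, is that $M=K\setminus(K+T)$ is neither open nor closed, so the deformation retraction and the resulting acyclicity must be set up on the locally closed set $M$ (for instance inside the manifold $(K+T)^c$) rather than on a compact polyhedron.
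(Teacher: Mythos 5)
Your proposal correctly identifies the overall shape of the argument (foliate the complement of $K+T$ by rays, retract $M=K\setminus(K+T)$ onto a cap, show the cap is acyclic), and your reduction observations and single-translate warm-up are sound. But the step you yourself flag as ``the main obstacle'' --- contractibility of the exit cap $E$, equivalently of the base $P\subset\partial(K+T)$ of boundary points whose outward normal ray meets $K$ --- is precisely the content of the lemma, and your setup does not deliver it. The inclusion of the relevant normal directions in the spherical cap $\hat N=\{n: h_T(n)<0\}$ is only a necessary condition: the support point of $K+T$ in direction $n$ is $x_K(n)+x_T(n)$, and even when $h_T(n)<0$ the lateral displacement by $x_T(n)$ can carry the normal ray entirely past $K$, so $P$ is in general a \emph{proper} subset of the points of $\partial(K+T)$ with normals in $\hat N$, and ``joint convexity'' gives no evident control of its topology. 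In other words, the normal-ray foliation of $(K+T)^c$ is the wrong foliation: its base is not obviously contractible, and no argument is offered.

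The paper resolves exactly this point by choosing a different family of rays. After reducing (by approximation) to smooth, strictly convex $K$ and $T$, one observes that $s_K-s_{K+T}=-s_T$ is concave, so $C=\{p: s_T(p)<0\}$ is an open convex cone; the set $K'\subset\partial K$ of points where the functionals $p\in C$ attain their maximum on $K$ is then homeomorphic to $C\cap\mathbb S^{n-1}$, hence contractible. The ray $\rho(p)$ is taken to start at the support point $x(p)$ of $K$ and pass through the support point $y(p)$ of $K+T$ (direction $x_T(p)$, not the normal $p$); one verifies that these rays are pairwise disjoint, cover $(\inte K)\setminus(K+T)$, and meet it in intervals, giving a retraction onto the contractible base $K'$. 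So the base is parametrized directly by the convex cone $C$ rather than by an a priori uncontrolled subset of it --- this is the idea your proposal is missing. Your secondary technical concern (that $M$ is only locally closed and that exit times along rays need not be continuous at $\partial K$) is also real and is handled in the paper by first passing to $(\inte K)\setminus(K+T)$ via a starshaped comparison of \v Cech and singular cohomology, and by the smooth strictly convex approximation; as written, your retraction on all of $M$ would run into discontinuous exit times for rays grazing $\partial K$.
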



The proof of Lemma \ref{lemma:acyclic} was only published in Russian~\cite{kar2001car}, so we provide a sketch of the proof here for the reader's convenience.

\begin{proof}[Sketch of the proof] We start by making a series of reductions. First of all we may assume that $K$ has nonempty interior, or else we simply proceed within the affine hull of $K$. Next, we may assume that the interiors of $K$ and $K+T$ intersect, as the other case is evident. 

Now we choose a point
$$
p\in\inte K \cap \inte (K+T).
$$
For a ray $\rho$ starting at $p$, the intersection points $\rho\cap \partial K$ and $\rho\cap\partial (K+T)$ depend continuously on $\rho$. This allows us to build a homotopy equivalence between $K\setminus(K+T)$ and $(\inte K)\setminus (K+T)$; this can be done in the intersection with every $\rho$ in a way which depends continuously on $\rho$.

In the same way, working in the intersection with every $\rho$, we can, for every neighborhood $U\supseteq K\setminus (K+T)$, construct a neighborhood $U'\supseteq K\setminus (K+T)$ contained in $U$ and homeomorphic to $(\inte K)\setminus (K+T)$. This shows that the singular cohomology of $(\inte K)\setminus (K+T)$ is the same as the \v{C}ech cohomology of $K\setminus (K+T)$, and the goal is therefore to establish acyclicity of the open set $(\inte K)\setminus (K+T)$. 

For this open set, we may switch to singular cohomology and homology. It is now sufficient to represent this set as a union of an inclusion increasing sequence of acyclic open sets, since singular chains always have compact support and must fit in one of the sets of such a sequence. Represent $T$ as the intersection of an inclusion decreasing sequence of smooth and strictly convex bodies $T_k$. Then $(\inte K)\setminus (K+T)$ will be the union of the inclusion increasing sequence of sets $(\inte K)\setminus (K+T_k)$. This shows that it suffices to consider smooth and strictly convex bodies in place of $T$. 

Now we represent $K$ as the union of an inclusion increasing sequence of smooth strictly convex $K_k$, so that 
$$
K \subseteq K_k + B_{\varepsilon_k},
$$
where $B_{\varepsilon_k}$ denotes a ball of radius $\varepsilon_k>0$, for some sequence $\varepsilon_k\to 0$. We also consider $T_k = T+B_{\varepsilon_k}$. For such a choice, $(\inte K_k)\setminus(K_k+T_k)\subseteq (\inte K)\setminus(K+T)$, but the sequence $(\inte K_k)\setminus(K_k+T_k)$ is not necessarily inclusion increasing. Still, it is possible to show (we omit the details) that every compactum $X\subset (\inte K)\setminus(K+T)$ will be contained in some $(\inte K_k)\setminus(K_k+T_k)$ for sufficiently large $k$, thus reducing the question of acyclicity to smooth and strictly convex $K$ and $T$.

Now we consider the support functions
$$
s_K(p) = \sup_{x\in K}(p, x), \quad s_T(p) = \sup_{x\in T}(p, x), \quad s_{K+T}(p) = \sup_{x\in K+T}(p,x)
$$
and note that $s_{K+T}(p) = s_K(p) + s_T(p)$. Therefore $s_K(p) - s_{K+T}(p)$ is concave. Hence the set of $p\in \mathbb R^n$ such that $s_K(p) >  s_{K+T}(p)$ is either empty (in which case $K\setminus (K+T)$ is also empty) or an open convex cone $C$. Consider the subset $K'\subset K$, where the linear functions $p\in C$ attain their maxima on $K$. From the smoothness and strict convexity it follows that this set is homeomorphic to an intersection of $C$ with the unit sphere, and is therefore contractible. 

After this, for every $x\in K'$ it is possible to choose a ray $\rho_x$ from $x$ in such a way that it depends continuously on $x$, no pair of such rays intersect each other, every point in $(\inte K)\setminus(K+T)$ belongs to one such ray, and the intersection of $(\inte K)\setminus(K+T)$ with every $\rho_x$ is an interval depending continuously on $x$. This all allows to conclude that $(\inte K)\setminus(K+T)$ is homotopy equivalent to $K'$, and is therefore contractible. 

The choice of $\rho_x$ in~\cite{kar2001car} was a bit lengthy, but here we want to outline a simpler construction. For any linear function $p\in C$, we take $x(p)$ to be the point where $p$ attains its maximum on $K$, and $y(p)$ to be the point where $p$ attains its maximum on $K+T$. The ray $\rho(p)$ will just start at $x(p)$ and pass through $y(p)$, it is easy to check that its direction is in the interior of the polar cone of $C$. For any two $p', p''\in C$ the two rays $\rho(p'), \rho(p'')$ do not intersect, in order to establish this it is sufficient to consider the projection of the picture to the two-dimensional plane with coordinates $p'$ and $p''$. The fact that every point $z\in \inte K$ has such a ray passing through $z$ is established by considering a homothet $H_z^{-t}(K+T)$ with center $z$ and big negative coefficient $-t$ and decreasing $t$ until $H_z^{-t}(K+T)$ touches $K$ at a point $x$, the point $y$ is then $H_z^{-1/t}(x)$. Eventually, we can retract the set $(\inte K)\setminus(K+T)$ to $K'$ (the set of all possible $x(p)$) along these rays to show its contractibility.
\end{proof}


\begin{proof}[Proof of Theorem~\ref{theorem:colorfulcovering}] Take a system of representatives $x_i\in X_i$. The assumption that $K-t\supset\{x_0, x_1,\ldots, x_n\}$ reads as $t\in K-x_i$ for every $i=0, 1,\ldots, n$, and the assumption $K-t\not\ni 0$ reads as $t\not\in K$. Now put 
$$
F_{x_i} = (K-x_i)\setminus K.
$$
These sets form $n+1$ families $\mathcal F_i$ indexed by $i$. The hypothesis of the theorem now reads: For any system of representatives $F_i\in \mathcal F_i$, their intersection is nonempty. 

Now we show that any intersection of $F_i$'s is either empty or acyclic. Indeed, this set is obtained as follows: First, we intersect a family of translates of $K$ to obtain $K\gdiv T$ (in the notation of the introduction); second, we subtract $K$ to obtain $(K\gdiv T)\setminus K$. But Definition~\ref{definition:genset} means that $K = (K\gdiv T) + T'$ for a convex compactum $T'$, and Lemma~\ref{lemma:acyclic} concludes that the set $(K\gdiv T)\setminus K$ is either empty of acyclic. Thus the colorful topological Helly theorem is applicable, and for some $i$, the sets $\{K-x_i\}_{x_i\in X_i}$ have a common point outside of $K$, which is equivalent to the conclusion of the theorem.
\end{proof}

\section{More Carath\'{e}odory-type statements}
\label{section:more}

It is possible to generalize Theorem~\ref{theorem:colorfulcovering} slightly, by replacing the requirement of not touching the origin with not touching a given convex compactum. Let us say that a translate $K-t$ \emph{separates} a set $S$ from a convex compactum $C$ if $K-t$ contains the set $S$ and is disjoint from $C$.

\begin{theorem}
\label{theorem:colorfulcovering2}
Let $X_0, X_1, \ldots, X_n\subset \mathbb R^n$ be finite sets, $K$ a generating set in $\mathbb R^n$, and $C$ a convex compactum. Suppose every system of representatives $x_0\in X_0$, $x_1\in X_1$, $\ldots$, $x_n\in X_n$ can be separated from $C$ by a translate $K-t$. Then there exists a translate of $K$ that separates $X_i$ from $C$, for some $0\leq i \leq n$.
\end{theorem}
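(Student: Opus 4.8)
The plan is to mimic the proof of Theorem~\ref{theorem:colorfulcovering} almost verbatim, the only change being that the single forbidden point (the origin) is replaced by the convex compactum $C$, which amounts to enlarging the convex body that plays the ``forbidden'' role. First I would translate the two separation conditions into containment conditions on the translation vector $t$. As before, $K - t \supseteq \{x_0, x_1, \ldots, x_n\}$ is equivalent to $t \in K - x_i$ for every $i$. The new condition $(K-t) \cap C = \emptyset$ is equivalent to $t \notin K - C$, where $K - C = K + (-C) = \{k - c : k \in K,\ c \in C\}$ is again a convex compactum, since a point of $C$ lies in $K - t$ exactly when $t \in K - c$ for some $c \in C$. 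Writing $K' = K - C$, I would set
$$
F_{x_i} = (K - x_i) \setminus K',
$$
and let these sets form $n+1$ families $\mathcal F_0, \mathcal F_1, \ldots, \mathcal F_n$ indexed by $i$. The hypothesis of the theorem then says precisely that the intersection of any system of representatives $F_{x_i} \in \mathcal F_i$ is nonempty.

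Next I would verify the acyclicity condition needed for the colorful topological Helly theorem. The intersection of the members of any subfamily, indexed by a finite set $S \subseteq \bigcup_i X_i$, is $(K \gdiv S) \setminus K'$. Using the generating set property, write $K = (K \gdiv S) + T'$ for a convex compactum $T'$, and put $A = K \gdiv S$. Then $K' = K - C = A + (T' - C)$, where $T' - C = T' + (-C)$ is once more a convex compactum. Hence
$$
(K \gdiv S)\setminus K' = A \setminus \bigl(A + (T' + (-C))\bigr),
$$
which is empty or acyclic by Lemma~\ref{lemma:acyclic}. Exactly as in Section~\ref{section:proof1}, since the forbidden set $K'$ is a fixed compactum, one passes to the manifold $\mathbb R^n \setminus K'$ and regards each $F_{x_i}$ as a closed subset there, so that the \v{C}ech cohomology of the relevant unions can be computed from the nerve and the colorful topological Helly theorem applies.

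Finally, that theorem yields, for some $i$, a common point $t$ of the whole family $\{(K - x)\setminus K' : x \in X_i\}$. This $t$ satisfies $t \in K - x$ for all $x \in X_i$ and $t \notin K'$, that is $K - t \supseteq X_i$ and $(K - t) \cap C = \emptyset$, which is exactly the conclusion that $K - t$ separates $X_i$ from $C$. I do not expect a genuine obstacle here: the entire content is the observation that passing from the origin to $C$ only replaces the convex compactum subtracted inside Lemma~\ref{lemma:acyclic} by $T' + (-C)$, and the single point to check is that this Minkowski sum is still a convex compactum, which is immediate. Setting $C = \{0\}$ recovers Theorem~\ref{theorem:colorfulcovering}, so the argument is a strict generalization with no new topological input.
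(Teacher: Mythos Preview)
Your proposal is correct and follows essentially the same route as the paper's proof: set $F_{x_i} = (K - x_i)\setminus (K + (-C))$, observe that $K\gdiv S$ is a Minkowski summand of $K+(-C)$ (since $K+(-C) = (K\gdiv S) + (T' + (-C))$), apply Lemma~\ref{lemma:acyclic}, and invoke the colorful topological Helly theorem. The paper states this in two sentences; you have simply written out the details.
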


\begin{proof}
The proof proceeds like the proof of Theorem~\ref{theorem:colorfulcovering}, putting
$$
F_{x_i} = (K-x_i)\setminus (K+(-C)).
$$
Again, any intersection of translates $K\gdiv T$ is a Minkowski summand of $K+(-C)$, thus the colorful topological Helly theorem is applicable. 
\end{proof}

The following result improves Theorem~\ref{theorem:colorfulcovering} in the case when the union of the $X_i$ is covered by the interior of a single translate of $K$. In this case we obtain a generalization of the version of the colorful Carath\'eodory theorem (``very colorful theorem'') appearing in \cite{arocha2009very, holmsen2008surrounding}:

\begin{theorem} 
\label{theorem:strong-caratheodory}
Let $X_0$, $X_1$, $\dots$, $X_n \subset \mathbb{R}^n$ be non-empty finite sets and $K$ a generating set in $\mathbb{R}^n$ containing the origin and the sets $X_0, X_1, \dots, X_n$ in its interior. Suppose every system of representatives $x_0\in X_0$, $x_1\in X_1$, $\ldots$, $x_n\in X_n$ can be separated from the origin by a translate $K-t$. Then there exists a translate of $K$ that separates $X_i\cup X_j$ from the origin, for some $0\leq i < j \leq n$.
\end{theorem}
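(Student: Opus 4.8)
The plan is to mirror the proof of Theorem~\ref{theorem:colorfulcovering}, passing to the parameter space of translates. For each point $x\in\bigcup_i X_i$ I set
$$
F_x = (K-x)\setminus K,
$$
regarded as a subset of the manifold $M=\mathbb R^n\setminus K$, and I let $\mathcal F_i=\{F_x:x\in X_i\}$. As in Section~\ref{section:proof1}, $t\in F_x$ says exactly that the translate $K-t$ contains $x$ and avoids the origin. Thus the hypothesis ``every system of representatives is separable'' becomes ``every rainbow selection $F_{x_0},\dots,F_{x_n}$ has nonempty intersection'', while the desired conclusion ``some pair $X_i\cup X_j$ is separable'' becomes ``for some $i<j$ the whole subfamily $\mathcal F_i\cup\mathcal F_j$ has a common point.'' So it suffices to prove a \emph{very colorful} Helly theorem for the families $\mathcal F_i$.

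The acyclicity hypothesis is handled exactly as before: every intersection of a subfamily of $\bigcup_i\mathcal F_i$ has the form $(K\gdiv S)\setminus K$, and since $K$ is a generating set, $K=(K\gdiv S)+T'$ for a convex compactum $T'$, so Lemma~\ref{lemma:acyclic} makes each such intersection empty or acyclic; the fact that the $F_x$ are neither open nor closed is dealt with by working inside $M$ and reading the cohomology off the nerve, as in Section~\ref{section:proof1}. The genuinely new ingredient is the \emph{ambient} topology. Since $K$ is a convex body, $M=\mathbb R^n\setminus K$ is homotopy equivalent to $S^{n-1}$, and the assumption $X_i\subset\inte K$ guarantees that each $F_x$ is nonempty and collars $\partial K$ from outside. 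Each $F_x$ meets $\partial K$ in the outer normal directions $u$ with $(u,x)<0$, so $\bigcup_x F_x$ covers every direction $u$ for which some $x$ satisfies $(u,x)<0$. Either these directions exhaust $S^{n-1}$, in which case $\bigcup_x F_x\simeq M\simeq S^{n-1}$ carries the fundamental class, or there is a direction $u$ with $(u,x)\ge 0$ for all $x$, in which case a single translate of $K$ separates all of $\bigcup_i X_i$ from the origin and the (stronger) conclusion already holds. Hence I may assume the cover realizes the sphere.

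With the hypotheses verified, the heart of the argument is the following purely topological statement, to be established in Section~\ref{section:leray-link}: if $\mathcal F_0,\dots,\mathcal F_n$ are finite families of suitably nice subsets of a space homotopy equivalent to $S^{n-1}$, whose subfamily intersections are empty or acyclic and whose union is the whole space, and if every rainbow selection has nonempty intersection, then some pair $\mathcal F_i\cup\mathcal F_j$ has a common point. The mechanism I expect to use is that the good cover has nerve $N$ homotopy equivalent to $S^{n-1}$, so that $\tilde H^{n-1}(N)=\mathbb Z$; this nonvanishing top class is exactly the extra information, beyond the bare Leray bound, that a Leray-number computation together with an analysis of the links $\lk_N(\sigma)$ of the faces of $N$ converts into a second color, upgrading the single-color conclusion of Theorem~\ref{theorem:colorfulcovering} to a two-color one. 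Feeding the verified families into this theorem yields indices $i<j$ and a point $t\in\bigcap_{x\in X_i\cup X_j}F_x$; the translate $K-t$ then separates $X_i\cup X_j$ from the origin, which is the assertion to be proved.

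The main obstacle is plainly the topological very colorful Helly theorem of Section~\ref{section:leray-link}: turning the nonzero class in $H^{n-1}(M)$ into the extra color is where the link/Leray machinery must do real work, and where the continuity of \v{C}ech cohomology and the nerve comparison are needed to accommodate the sets $F_x$ that are neither open nor closed. A secondary, more routine point is the clean deduction of the covering/sphere condition from the interior hypotheses, including the degenerate case disposed of above by Theorem~\ref{theorem:colorfulcovering}.
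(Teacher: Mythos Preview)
Your overall architecture is right---translate to the nerve of the family $\{F_x\}$ and invoke a combinatorial statement (the paper's Proposition~\ref{proposition:leray-link})---but you misidentify the extra geometric ingredient that upgrades one color to two. The proposition needs, besides $\tilde H_i(\mathcal C)=0$ for $i\ge n$, the \emph{link} condition $\tilde H_i(\lk_{\mathcal C}(\sigma))=0$ for $i\ge n-1$ and every nonempty face $\sigma$. From your realization inside the $n$-dimensional manifold $M=\mathbb R^n\setminus K$ this link condition does \emph{not} follow: the link of $\sigma$ is the nerve of the family $\{F_y\cap\bigcap_{x\in\sigma}F_x\}$, whose union sits inside the acyclic but genuinely $n$-dimensional set $\bigcap_{x\in\sigma}F_x$, and open subsets of such a set can perfectly well carry $(n-1)$-dimensional homology. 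The paper's new step is to \emph{project radially to $\partial K$}: setting $L_x=\pi(F_x)\subset\partial K$, one checks (using $0\in\inte(K-x)$, which is exactly where the hypothesis $X_i\subset\inte K$ enters) that $\bigcap_{x\in S}L_x\neq\emptyset\iff\bigcap_{x\in S}F_x\neq\emptyset$, so the nerve is unchanged, but now the family lives on the $(n-1)$-sphere $\partial K$. For a nonempty $\sigma$ the link is then the nerve of a good family inside the acyclic, hence proper, subset $\bigcap_{x\in\sigma}L_x\subset S^{n-1}$, and the link condition drops out from the $(n-1)$-dimensionality.

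Several of your side claims are also off. The union $\bigcup_x F_x$ is bounded while $M$ is not, so it never ``is the whole space''; and even if every direction $u$ admits some $x$ with $(u,x)<0$, this does not by itself produce a translate of $K$ separating all of $\bigcup_i X_i$ from the origin, so your dichotomy does not close. More importantly, Proposition~\ref{proposition:leray-link} uses only \emph{vanishing} conditions on $\mathcal C$ and its links; there is no role for a ``nonvanishing top class'' of the ambient sphere, and the mechanism in Section~\ref{section:leray-link} is Alexander duality applied to the link hypothesis, not a degree argument on $H^{n-1}(M)$. In short, the missing idea is the passage from $F_x$ to $L_x$ on $\partial K$; once you have that, conditions (3) and (4) are immediate and the rest of your outline goes through.
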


The proof of Theorem \ref{theorem:strong-caratheodory} uses the following result, which is a special case from \cite{holmsen2013}. Let $\mathcal C$ be a simplicial complex. For a simplex $\sigma \in \mathcal C$, the link of $\sigma$ is the simplicial complex defined as 
\[
\lk_{\mathcal C}(\sigma) = \{\tau \in \mathcal C : \tau \cap \sigma = \emptyset, \tau \cup \sigma \in \mathcal C\}.
\]
Here and in the following section we let $\tilde{H}_*(\mathcal C)$ denote the reduced homology with rational coefficients.

\begin{proposition} 
\label{proposition:leray-link}
Let $\mathcal C$ be a simplicial complex on the vertex set $V$ which satisfies the following:
  
  \begin{enumerate}
  \item The vertices are partitioned into non-empty parts $V = V_0 \cup V_1 \cup \cdots \cup V_n$.
  
  \item $\mathcal C$ contains every system of representatives (transversal for short) $v_0\in V_0, v_1\in V_1, \dots, v_n\in V_n$ of the partition. That is, every transversal $\{v_0, v_1, \dots ,v_n\}$ is a simplex in $\mathcal C$. 
  
  \item For all $i\geq n$, we have $\tilde{H}_i(\mathcal C) = 0$.
  
  \item For every non-empty simplex $\sigma \in \mathcal C$ and $i\geq n-1$, we have $\tilde{H}_i(\lk_{\mathcal C}(\sigma)) = 0$.
  \end{enumerate}
Then there exist indices $0\leq i < j \leq n$ such that $V_i\cup V_j$ is a simplex in $\mathcal C$.
\end{proposition}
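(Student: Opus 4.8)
The plan is to prove this by contradiction, assuming that no union $V_i\cup V_j$ is a simplex, and to extract from this assumption a violation of the homological hypotheses (3) or (4). The natural engine is the topological colorful Helly theorem stated at the beginning of Section~\ref{section:proof1}: for a complex whose links are suitably acyclic and all of whose transversals are faces, \emph{some} single color class is a face. Conditions (3) and (4) say precisely that every non-empty link is $(n-1)$-Leray (its reduced homology vanishes from degree $n-1$ on) while the whole complex is only $n$-Leray. Since there are $n+1=(n-1)+2$ color classes, this places us in the regime where one expects not one but \emph{two} color classes to form a face; the task is therefore to upgrade the one-class conclusion to a two-class conclusion, and the extra hypothesis (4) is what should make this possible.

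The key homological input I would isolate first is the subcomplex $\mathcal D=V_0 * V_1 * \cdots * V_n\subseteq \mathcal C$ of partial transversals (those faces meeting each color at most once); it lies in $\mathcal C$ by hypothesis (2). As a join of the discrete sets $V_i$, its reduced homology is concentrated in the top degree, with $\tilde H_n(\mathcal D)\cong\bigotimes_i \tilde H_0(V_i)\ne 0$ whenever every $|V_i|\ge 2$ (the degenerate case $|V_i|=1$ makes the corresponding vertex a cone point and is handled separately). Condition (3) forces $\tilde H_n(\mathcal C)=0$, so the inclusion $\mathcal D\hookrightarrow\mathcal C$ must kill this top class. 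As the only faces of $\mathcal C$ outside $\mathcal D$ are those repeating a color, it is exactly such repeated-color faces that fill the nontrivial $n$-cycles of $\mathcal D$; this is the leverage forcing two color classes to collapse together. This also explains why (3) is stated with the bound $n$ rather than $n-1$: the bound must be loose enough to permit the nonzero $\tilde H_n(\mathcal D)$ to die rather than be excluded outright.

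I would then run an induction whose step passes to a link. Choosing a vertex $v\in V_c$, hypothesis (4) guarantees that $\lk_{\mathcal C}(v)$ and all of \emph{its} links are $(n-1)$-Leray (since links of links are links in $\mathcal C$ of non-empty simplices containing $v$), so the Leray degree drops by one; meanwhile the $n$ colors $\{V_i : i\ne c\}$ restricted to $\lk_{\mathcal C}(v)$ still have all their transversals as faces, because any such transversal together with $v$ is a full transversal of $\mathcal C$. Applying the one-class colorful Helly theorem inside the now fully $(n-1)$-Leray link yields a color class forming a face together with $v$; one then propagates this across the whole class $V_c$ using the acyclicity of the relevant links, upgrading from a single vertex to an entire pair $V_i\cup V_j$. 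The long exact sequence of the pair $(\mathcal C,\mathcal D)$, together with the Mayer--Vietoris sequence relating $\mathcal C$, the deletion of $v$, and $v * \lk_{\mathcal C}(v)$, is what transports the vanishing in (3),(4) through the induction.

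The main obstacle I expect is bookkeeping the color structure under this recursion: taking links does not respect the partition, since a link may contain many vertices of one color and may meet fewer than $n+1$ colors, so the delicate point is to organize the induction (or an equivalent filtration of $\mathcal C$ by repeated-color faces) so that conditions (3) and (4) transfer with the Leray degree correctly decremented, and to convert the pointwise output (a vertex together with a class) into the full conclusion that an entire pair $V_i\cup V_j$ is a simplex. A legitimate shortcut, which is presumably the route taken here, is to quote the general Leray-type colorful theorem of~\cite{holmsen2013} and simply verify that conditions (1)--(4) are its hypotheses specialized to the parameter forcing two color classes.
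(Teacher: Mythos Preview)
Your proposal identifies several correct ingredients---the transversal complex $\mathcal D$, the observation that its nonzero $\tilde H_n$ must die in $\mathcal C$ by condition~(3), and the drop in Leray number when passing to a link---but there is a genuine gap at exactly the step you yourself flag as the main obstacle. Applying the one-class colorful Helly theorem inside $\lk_{\mathcal C}(v)$ for a vertex $v\in V_c$ does produce an index $i=i(v)$ with $\{v\}\cup V_{i(v)}$ a face of $\mathcal C$, but this index may depend on $v$, and nothing in the outlined Mayer--Vietoris or pair-sequence machinery forces a single $i$ to work for every $v\in V_c$ simultaneously. The sentence ``one then propagates this across the whole class $V_c$ using the acyclicity of the relevant links'' names the difficulty rather than resolving it; this coordination is the entire content of the upgrade from one color class to two. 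Quoting~\cite{holmsen2013} at the end is not a shortcut either, since the proposition is already presented as a special case of that reference and the section's purpose is to give a self-contained proof.

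The paper's argument sidesteps the coordination problem by dualizing globally rather than inducting locally. Under the contrapositive assumption that no $V_i\cup V_j$ is a face, every face $\sigma\in\mathcal C$ has complement meeting at least $n$ colors. One forms $\mathcal B=\mathcal C^\star * \mathcal D$ on a doubled vertex set $V\sqcup W$ (with $\mathcal D$ now built on the copy $W$), partitioned into the $N$ pairs $\{v_i,w_i\}$, and applies Meshulam's homological Sperner lemma (Lemma~\ref{lemma:meshulam-colorful}). A colorful simplex in $\mathcal B$ unwinds to a partial transversal of the $V_i$ that is a non-face of $\mathcal C$, contradicting~(2). The homological hypotheses of Meshulam's lemma are checked on each $\mathcal B[I]$ via the K\"unneth formula for the join: Alexander duality converts conditions~(3) and~(4) into vanishing of $\tilde H_k(\mathcal C^\star[S])$ up to degree roughly $|S|-n-2$, while the color-count assumption makes $\mathcal D[T]$ a join of at least $n$ discrete sets, giving vanishing of $\tilde H_l(\mathcal D[T])$ up to degree $n-2$; the two ranges overlap just enough to kill $\tilde H_i(\mathcal B[I])$ through degree $|I|-2$. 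This use of Alexander duality is the missing idea that replaces your propagation step.
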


Let us first show how to deduce Theorem \ref{theorem:strong-caratheodory} from Proposition \ref{proposition:leray-link}, the proof of the latter will be given in the next section.

\begin{proof}[Proof of Theorem \ref{theorem:strong-caratheodory}] 
We define a simplicial complex $\mathcal C$ on the vertex set $V = X_0\sqcup X_1 \sqcup \cdots \sqcup X_n$. By this we mean that each point corresponds to a vertex, but we keep track of multiplicities in the sense that a point which appears in both $X_i$ and $X_j$ appears as distinct vertices in $V$. The simplices of $\mathcal C$ are the subsets of $X_0\sqcup X_1 \sqcup \cdots \sqcup X_n$ which can be separated from the origin by a translate of $K$. 

Note that by the hypothesis we may assume that every point is distinct from the origin and can therefore be separated by a translate of $K$, so every point corresponds to a vertex of $\mathcal C$. We will interchange freely between referring to vertices of $\mathcal C$ and points in $X_0\sqcup X_1 \sqcup \cdots \sqcup X_n$. Note that the hypothesis implies that the simplicial complex $\mathcal C$ satisfies conditions (1) and (2) of Proposition \ref{proposition:leray-link}, so to complete the proof it remains to verify that $\mathcal C$ satisfies conditions (3) and (4).

For a point $x\in V$, let $K_x = (K-x)\setminus K$. As in the proof of Theorem \ref{theorem:colorfulcovering}, a subset $S\subset V$ is separated from the origin by a translate $K-t$ if and only if the set $\bigcap_{x\in S} K_x$ is non-empty. Thus, the simplicial complex $\mathcal C$ is just the nerve of the family $\{K_x\}_{x\in V}$. We now use the nerve theorem (justified in Section~\ref{section:proof1}) to verify conditions (3) and (4).

Condition (3): We know from Lemma~\ref{lemma:acyclic} that for any subset $S\subset V$ the set $\bigcap_{x\in S} K_x$ is empty or acyclic. Thus we may apply the nerve theorem, which implies that $\mathcal C$ has the same homology as $\bigcup_{x\in V} K_v \subset \mathbb R^n\setminus K$. This shows that $\tilde{H}_i(\mathcal C)=0$ for all $i\geq n$.

Condition (4):
For each $x\in V$ we define a subset $L_x$ contained in the boundary of $K$ as
\[
L_x = \left\{\pi(v) : v \in K_x \right\},
\] 
where $\pi$ denotes the central projection from the origin to the boundary of $K$. 

The crucial observation is that for any $S\subset V$ we have
\[
\bigcap_{x\in S} L_{x} \neq \emptyset \iff \bigcap_{x\in S} K_{x} \neq \emptyset.
\]
This follows because, for every $x\in V$, the translate $K-x$ contains the origin in its interior (this is assumed in the statement of the theorem) and every point in $\bigcup_{x\in V}K_x$ can be seen from the origin, that is, it is strictly starshaped. Hence the preimage of any $p\in L_x$ in $K_x$ under the map $\pi$ is a semi-interval starting from some $q\in\partial (K-x)$ and ending in $p$ (not containing $p$). Therefore, a nonempty intersection of several of $L_x$'s implies a nonempty intersection of the corresponding set of $K_x$'s; the opposite is trivially true.

Let $\sigma$ be a non-empty simplex of $\mathcal C$. The vertices of $\lk_{\mathcal C}(\sigma)$ correspond to the points $y\not\in \sigma$ such that $L_y \cap \left(\bigcap_{x\in \sigma}L_x\right) \neq\emptyset$. As before, we know that for every subset of vertices $S$ in $\lk_{\mathcal C}(\sigma)$, the set 
\[
\left(\bigcap_{y\in S}L_y\right) \cap \left(\bigcap_{x\in \sigma}L_x\right)
\] 
is empty or acyclic. So applying the nerve theorem to the family $\left\{L_y\cap \left(\bigcap_{x\in \sigma}L_x\right)\right\}$, where $y$ ranges over the vertices of $\lk_{\mathcal C}(\sigma)$, we see that $\lk_{\mathcal C}(\sigma)$ is homotopy equivalent to $\left(\bigcup L_y\right)\cap \left(\bigcap_{x\in \sigma}L_x\right) \subset \bigcap_{x\in \sigma}L_x$. Since $\bigcap_{x\in \sigma}L_x$ is an acyclic subset of the boundary of $K$, and the boundary of $K$ is homemorphic to $\mathbb{S}^{n-1}$, it follows that $\tilde{H}_i(\lk_{\mathcal C}(\sigma)) = 0$ for all $i  \geq (n-1)$. 

Now all the conditions of Proposition \ref{proposition:leray-link} are satisfied, so Theorem \ref{theorem:strong-caratheodory} is proved.
\end{proof}

\section{Proof of Proposition~\ref{proposition:leray-link}}
\label{section:leray-link}

For completeness, we now give a proof of Proposition~\ref{proposition:leray-link}. The main tool used in this proof is a Sperner-type lemma due to Meshulam \cite{meshulam2001}, which also appears in \cite{km2005}. 

Let $\mathcal C$ be a simplicial complex with vertex set $X$ which is partitioned into non-empty parts $X = X_1 \sqcup X_2 \cup \cdots \sqcup X_N$. A \emph{colorful simplex} in $\mathcal C$ is an $(N-1)$-simplex which is a transversal of the partition. For a non-empty subset $I\subset \{1,2,\dots, N\}$, let $\mathcal C[I]$ denote the subcomplex of $\mathcal C$ induced by the vertices $\bigcup_{i\in I}X_i$. Meshulam's lemma gives a sufficient condition for the existence of a colorful simplex in $\mathcal C$.

\begin{lemma}
\label{lemma:meshulam-colorful}
Let $\mathcal C$ be a simplicial complex with vertex set $X$ which is partitioned into non-empty parts $X = X_1$ $\sqcup$ $X_2$ $\sqcup \cdots \sqcup$ $X_N$. Suppose for every non-empty subset $I\subset \{1,2,\dots, N\}$, we have \[\tilde{H}_i(C[I]) = 0 , \text{ for all }  i \leq |I|-2.\] Then $\mathcal C$ contains a colorful simplex.
\end{lemma}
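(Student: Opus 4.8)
The plan is to argue by contradiction via the \emph{coloring map}. Write $[N]=\{1,\dots,N\}$, let $\Delta$ be the full simplex on the color set $[N]$, and let $f\colon\mathcal C\to\Delta$ be the simplicial map sending every vertex of $X_j$ to $j$. For a face $\sigma\in\mathcal C$, $f(\sigma)$ is the set of colors occurring in $\sigma$, and $f(\sigma)=[N]$ precisely when $\sigma$ meets every part, in which case $\sigma$ contains a transversal and hence a colorful simplex. Thus the assumption that $\mathcal C$ has \emph{no} colorful simplex is equivalent to $f$ taking values in the boundary $\partial\Delta\cong\mathbb S^{N-2}$. Since the hypothesis applied to $I=[N]$ gives $\tilde H_{N-2}(\mathcal C)=0$, it suffices to reach a contradiction by producing a nonzero class in $\tilde H_{N-2}(\mathcal C)$; conceptually this amounts to showing that $f_*\colon\tilde H_{N-2}(\mathcal C)\to\tilde H_{N-2}(\partial\Delta)\cong\mathbb Q$ is onto, i.e. that $f$ has nonzero degree over the sphere.

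The bridge between the hypothesis and the geometry is the dictionary $f^{-1}(\Delta_I)=\mathcal C[I]$: the preimage of the closed face spanned by a nonempty $I\subseteq[N]$ is exactly the induced subcomplex, and the hypothesis says $\mathcal C[I]$ is $(|I|-2)$-acyclic, i.e. acyclic up to one dimension below $\dim\Delta_I=|I|-1$. This is precisely the fiber connectivity demanded by a homological nerve (Quillen fiber) comparison, and I would run it by induction on $N$ through Mayer--Vietoris, using reduced (augmented) sequences throughout so that the convention $\tilde H_{-1}(\emptyset)\cong\mathbb Q$ covers the low-degree and empty-intersection cases uniformly. Decompose the target sphere as $\partial\Delta=D_+\cup D_-$, with $D_-=\Delta_{[N-1]}$ the facet opposite the vertex $N$ and $D_+$ the closed star of $N$; these are contractible disks meeting in $D_+\cap D_-=\partial\Delta_{[N-1]}\cong\mathbb S^{N-3}$. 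Pulling back along $f$ yields $\mathcal C=\mathcal A\cup\mathcal B$ with $\mathcal B=f^{-1}(D_-)=\mathcal C[\{1,\dots,N-1\}]$, $\mathcal A=f^{-1}(D_+)$, and $\mathcal A\cap\mathcal B=f^{-1}(\partial\Delta_{[N-1]})$.

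Feeding these into Mayer--Vietoris is the heart of the induction. First, $\mathcal B=\mathcal C[\{1,\dots,N-1\}]$ is $(N-3)$-acyclic by hypothesis, so $\tilde H_{N-3}(\mathcal B)=\tilde H_{N-2}(\mathcal B)=0$. Second, $\mathcal A\cap\mathcal B$ is an instance of the statement for $N-1$ colors: it is a complex on the parts $X_1,\dots,X_{N-1}$ whose coloring map lands in $\partial\Delta_{[N-1]}$, so it contains no colorful simplex in those colors, while its proper induced subcomplexes $(\mathcal A\cap\mathcal B)[J]=\mathcal C[J]$, $J\subsetneq\{1,\dots,N-1\}$, carry exactly the required connectivity; hence by the inductive hypothesis $\tilde H_{N-3}(\mathcal A\cap\mathcal B)\neq0$. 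Granting for the moment that $\tilde H_{N-3}(\mathcal A)=0$ as well, the two outgoing maps from $\tilde H_{N-3}(\mathcal A\cap\mathcal B)$ vanish, so by exactness the connecting map $\tilde H_{N-2}(\mathcal C)\to\tilde H_{N-3}(\mathcal A\cap\mathcal B)$ is onto a nonzero group; therefore $\tilde H_{N-2}(\mathcal C)\neq0$, the desired contradiction. The base case $N=1$ is exactly where the reduced-homology convention pays off: ``no colorful simplex'' then means $\mathcal C$ has no vertex, so $\mathcal C=\emptyset$ and $\tilde H_{-1}(\mathcal C)\cong\mathbb Q\neq0$.

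The step I expect to be the main obstacle is the one I deferred, namely $\tilde H_{N-3}(\mathcal A)=0$ for the preimage $\mathcal A=f^{-1}(D_+)$ of the star, which, unlike $\mathcal B$, is \emph{not} an induced subcomplex. The $N=2$ case already warns that the preimage of a contractible set need not be acyclic without using the fiber connectivity, so this is genuinely where the hypothesis is consumed. I would establish it by the same homological nerve principle applied to the contractible disk $D_+$: every face $\Delta_A\subseteq D_+$ satisfies $A\not\supseteq\{1,\dots,N-1\}$, hence is a \emph{proper} face of $\Delta$, so its preimage $\mathcal C[A]$ is $(|A|-2)$-acyclic by hypothesis, and the fiber lemma (provable here by an inner induction mirroring the outer one) gives $\tilde H_i(\mathcal A)\cong\tilde H_i(D_+)=0$ for all $i\le N-3$. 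Keeping the connectivity bounds aligned with the face dimensions at every stage of this nested induction is the delicate bookkeeping; once it is in place the remainder is formal diagram chasing in the Mayer--Vietoris sequences, and the resulting nonvanishing of $f_*$ on $\tilde H_{N-2}$ is the precise homological incarnation of the ``Sperner-type'' degree statement.
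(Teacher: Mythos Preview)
The paper does not actually prove this lemma: it is quoted as a known result of Meshulam (with a reference to \cite{meshulam2001} and \cite{km2005}) and is used as a black box in the proof of Proposition~\ref{proposition:leray-link}. So there is no ``paper's own proof'' to compare against; what you have written is a self-contained argument for a result the authors chose to cite.

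Your approach is essentially the standard one and is sound. The coloring map $f:\mathcal C\to\partial\Delta$, the induction on $N$ via the Mayer--Vietoris decomposition $\partial\Delta=D_+\cup D_-$, the identification $\mathcal B=\mathcal C[\{1,\dots,N-1\}]$, and the use of the inductive hypothesis on $\mathcal A\cap\mathcal B$ are all correct. The formulation of the inductive statement (``if there is no colorful simplex and all \emph{proper} $\mathcal C[I]$ satisfy the connectivity bound, then $\tilde H_{N-2}(\mathcal C)\neq 0$'') is the right one, and your check that $(\mathcal A\cap\mathcal B)[J]=\mathcal C[J]$ for $J\subsetneq[N-1]$ is valid.

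Two points deserve a little more care. First, the step you flag yourself, $\tilde H_{N-3}(\mathcal A)=0$, is the genuine content; the cleanest way to handle it is to write $\mathcal A=\bigcup_{j=1}^{N-1}\mathcal C[[N]\setminus\{j\}]$ and run the Mayer--Vietoris (or nerve) spectral sequence for this cover, noting that every $k$-fold intersection is $\mathcal C[[N]\setminus S]$ with $|S|=k$ and hence $(N-k-2)$-acyclic by hypothesis. This gives $E_1^{p,q}=0$ for $p+q\le N-3$ and hence $\tilde H_i(\mathcal A)=0$ for $i\le N-3$, which is exactly what you need. Second, at the bottom of the induction ($N=2$) the complex $\mathcal A\cap\mathcal B$ has no vertices at all (its simplices satisfy $f(\sigma)\subsetneq\{1\}$), so your description of it as ``a complex on the parts $X_1,\dots,X_{N-1}$'' is not literally accurate there; the argument still goes through via the reduced-homology convention $\tilde H_{-1}(\emptyset)\cong\mathbb Q$, but it is worth saying explicitly.
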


\begin{proof}[Proof of Proposition \ref{proposition:leray-link}]
Let $V = \{v_1, v_2, \dots, v_N\}$ denote the vertices of $\mathcal C$ and let $W=\{w_1, w_2, \dots, w_N\}$ be a disjoint copy of $V$. The partition $V = V_0\sqcup V_1 \sqcup \cdots \sqcup V_n$ induces a partition $W = W_0\sqcup W_1\sqcup \cdots \sqcup W_n$. Let $\mathcal D$ denote the simplicial complex on $W$ whose simplices consist of all partial transversals of this partition. Condition (2) therefore reads that for every simplex $\sigma_W = \{w_{i_1}, w_{i_2}, \dots, w_{i_k}\}$ in $\mathcal D$, the corresponding simplex $\sigma_V = \{v_{i_1}, v_{i_2}, \dots, v_{i_k}\}$ is in $\mathcal C$.

For a subset $S\subset V$ let $r(S)$ denote the cardinality of the inclusion-minimal subset $I\subset \{0,1,\dots, n\}$ such that $S\subset \bigcup_{i\in I}V_i$, that is, the minimal number of parts of the partition which contain $S$. The same notation will be used for subsets of $W$.

We prove the contrapositive. Suppose for every $0\leq i< j \leq n$ the set $V_i\cup V_j$ is not a simplex in $\mathcal C$, or equivalently, for every simplex $S$ in $\mathcal C$ we have $r(V\setminus S) \geq n$. We then show that there is a transversal of the partition which is not a simplex in $\mathcal C$, which violates condition (2).

Let $\mathcal C^\star$ denote the Alexander dual of $\mathcal C$, which is defined as 
\[
\mathcal C^\star = \{S\subset V : V\setminus S \not\in \mathcal C\}.
\] 
Define the simplicial complex $\mathcal B$ on vertices $X = V\sqcup W$ as the join of $\mathcal C^\star$ and $\mathcal D$, that is, $\mathcal B = \mathcal C^\star * \mathcal D$. The vertex set of $\mathcal B$ has a natural partition into $N$ non-empty parts $X = X_1 \sqcup X_2 \sqcup \cdots \sqcup X_N$ where  $X_i = \{v_i,w_i\}$, that is, each part consists of a vertex from $V$ and its copy in $W$.

Our goal is to use Lemma \ref{lemma:meshulam-colorful} to show that $\mathcal B$ has a colorful simplex. This will complete the proof because a colorful simplex in $\mathcal B$ corresponds to disjoint subsets $I$ and $J$ such that  $\{v_i\}_{i\in I}$ is a simplex in $\mathcal C^\star$, $\{w_j\}_{j\in J}$ is a simplex in $\mathcal D$, and  
\[
I\cup J = \{1,2,\dots, N\}.
\] 
However, $\{v_i\}_{i\in I}$ is a simplex in $\mathcal C^\star$ if and only if $\{v_j\}_{j\in J}$ is not a simplex in $\mathcal C$, which gives us the desired contradiction to condition (2) since $\{w_j\}_{j\in J}$ is a simplex in $\mathcal D$ and therefore a partial transversal of the partition $W= W_0\sqcup W_1 \sqcup \cdots \sqcup W_n$ (and therefore also of the corresponding partition of $V$). So it remains to verify that $\mathcal B$ satisfies the conditions of Lemma \ref{lemma:meshulam-colorful}.

For  $\emptyset \neq I \subset \{1,2,\dots, N\}$ let $S = \{v_i\}_{i\in I}$ and $T = \{w_i\}_{i\in I}$. We want to show that  $\tilde{H}_i(\mathcal B[I]) = 0$ for all $0\leq i \leq |I|-2$. If $S$ is a simplex in $\mathcal C^\star$, then $\mathcal C^\star[S]$ is acyclic which implies $\tilde{H}_i(\mathcal C^\star[S] * \mathcal D[T]) = 0$ for all $i$. So we assume that $S$ is not a simplex in $\mathcal C^\star$ and consequently $V\setminus S$ is a simplex in $\mathcal C$.

By the K\"unneth formula for the join, we have

\begin{equation} \label{eq:kunneth}
  \tilde{H}_i(\mathcal B[I])  \cong  \tilde{H}_i(\mathcal C^\star[S] * \mathcal D[T]) \cong  \bigoplus_{k+l = i-1} \tilde{H}_k(\mathcal C^\star[S])  \otimes \tilde{H}_l(\mathcal D[T]) .
\end{equation}

Clearly we may assume that $\mathcal C$ is not a simplex, so Alexander duality implies

\begin{equation}\label{eq:alexander}
\tilde{H}_k(\mathcal C^\star[S]) \cong \tilde{H}_{|S|-k-3}(\lk_{\mathcal C}(V\setminus S)).
\end{equation}

Moreover, if $r = r(S) = r(T)$, then $\mathcal D[T]$ is a join of $r$ disjoint sets of isolated vertices, and consequently $\mathcal D[T]$ is acyclic or $(r-2)$-connected, and therefore \[\tilde{H}_l(\mathcal D[T]) =0 \text{ for all }  l\leq r-2\]

We now consider two cases:
\begin{enumerate}
\item 
If $I = \{1,2,\dots, N\}$, then $S = V$, so $\mathcal C^\star[S] = \mathcal C^\star$ and $\lk_{\mathcal C}(\emptyset) = \mathcal C$. By condition (3) we have
\[
\tilde{H}_i(\mathcal C) = 0 \text{ for all } i\geq n,
\]
so by Alexander duality  \eqref{eq:alexander}
\[
\tilde{H}_{k}(\mathcal C^\star) = 0 \text{ for all } k\leq N-n-3.
\]
In this case we also have $T = W$, so $r(T) = n+1$ and $\mathcal D[T] = L$, which implies that 
\[
\tilde{H}_l(\mathcal D) = 0 \text{ for all } l\leq n-1.
\] 
Thus the K\"unneth formula \eqref{eq:kunneth} implies that $\tilde{H}_i(\mathcal B) =0$ for all $i\leq N-2$.

\item 
If $I$ is a proper subset of $\{1,2,\dots, N\}$, then $\sigma = V\setminus S$ is a non-empty simplex of $\mathcal C$. By condition (4) we have
\[
\tilde{H}_i(\lk_{\mathcal C}(\sigma)) = 0 \text{ for all } i\geq n-1,
\]
so by Alexander duality \eqref{eq:alexander}
\[
\tilde{H}_{k}(\mathcal C^\star[S]) = 0 \text{ for all } k\leq |S|-n-2.
\]
Since $\sigma$ is a simplex in $\mathcal C$, it follows from our assumption that $r(S) = r(T)\geq n$, which implies that 
\[
\tilde{H}_l(\mathcal D[T]) = 0 \text{ for all } l\leq n-2.
\]
Thus the K\"unneth formula \eqref{eq:kunneth} implies that $\tilde{H}_i(\mathcal B[I]) =0$ for all $i\leq |I|-2$.
\end{enumerate}

This shows that the conditions of Lemma \ref{lemma:meshulam-colorful} are met, and the proof is complete.
\end{proof}

\section{No Carath\'eodory-type theorem for arbitrary convex sets}
\label{section:no-cara}

The above proofs use the nontrivial colorful topological Helly theorem~\cite{km2005} and its relatives, like Proposition~\ref{proposition:leray-link}. It is interesting whether it is possible to make this argument more elementary, or give a different proof. For example, is it possible to prove such results with optimization ideas like in~\cite{ba1982}? 

Moreover, we may ask whether the ``generating set'' property is really needed in the argument; the usage of such a property here was dictated by the topological Helly-type theorems which require us to have acyclic intersections. 

In this section 
 we show that for dimension greater or equal to 3, the Carath\'eodory theorem does not holds for $K$-strong convexity when the set $K$ is an arbitrary convex body. In particular, for every positive integer $n$, we give an example of a set $X$ of $2n$ points and a convex set $K$ in $\mathbb{R}^3$ such that every proper subset of $X$ can be separated from the origin by a translate of $K$, but where no translate of $K$ separates the entire set $X$ from the origin. The convex set $K$ will be an epigraph of a function $f(x,y)$ and will therefore not be compact, but it is easy to produce a compact example by intersecting it with the halfspace $\{z \; : \; z \le C\}$ for a sufficiently large $C$.

Our point set $X$ will consist of the points for $k=1,\ldots, n$
$$
\xi_{\pm k} = (\pm k, 0, k^2).
$$
The point that we are going to test for containment in $\conv_K X$ and $\conv_K Y$ for proper subsets $Y\subset X$ is the origin $\xi_0 = (0, 0, 0)$.

Note that the set $X$ lies in the plane $\{y=0\}$ and on the parabola $z = h(x) := x^2$. We are going to consider $y$ as a parameter and describe how we choose the functions $f(x,y)$ of $x$ for given $y$. Let us choose them so that $f(x,y)$ is a $C^2$ smooth strictly convex (even with $f''_{xx} (x, y) \ge 1$) function of $x$, always satisfying the assumption 
$$
f(0,y) = 0,\quad f(\pm k, y) \le h(\pm k) = k^2, 
$$
for $k=1,\ldots, n$. When the equality in the latter inequality holds, we will call this ``\emph{$f(x,y)$ touches $\xi_{\pm k}$}''.

Now we require that $f(x,y)$ depends $C^2$ smoothly on $y$, is even in $y$, and satisfies the following assumption: $f(x,y)$ touches $\xi_{\pm k}$ precisely when $|y|\in [k-1, k]$ for $k<n$, and precisely for $|y|\in [n-1, +\infty)$ when $k=n$; moreover, let $f(x,y)$ be independent of $y$ for $|y|\ge n$. It is clear that such a family of functions exists and may be chosen twice continuously differentiable in $y$. The total function $f(x,y)$ is still not necessarily convex, but it can be adjusted to become convex after considering 
$$
f(x,y) + Ay^2
$$
with sufficiently large $A$ instead. This adjustment changes the restriction to the plane $y=\const$ only by adding a constant, which is not relevant to finding $\conv_K X$, since $y$ is constant on $X$. So we prefer to consider here the non-modified and intentionally non-convex $f$ for clarity.

From the ``touching'' assumption it is easy to conclude that $\xi_0$ is in $\conv_K X$. Indeed, if the epigraph is translated with some value $-y$, then we actually consider the convex hull of the planar set $X$ with respect to the set $K_y$, the epigraph of $f(x,y)$ with fixed $y$. To put it clearly,
$$
\conv_K X\cap \{y=0\} = \bigcap_y \conv_{K_y} X.
$$
Again, from the ``touching'' assumption it is relatively clear that $\xi_0$ is in $\conv_{K_y} X$ for every $y$. But, the touching assumption also guarantees that for $|y|\in (k-1, k)$ the only thing preventing $\xi_0$ from getting outside $\conv_{K_y} X$ is the pair of points $\xi_{\pm k}$. In this range, if $\xi_{-k}$ and $\xi_k$ are in the translated $K_y$ then $\xi_0$ gets into this translate of $K_y$ as well. But, if we drop one of the points $\xi_{\pm k}$ ($\xi_k$ without loss of generality) then it immediately becomes possibly to cover $X\setminus\{\xi_k\}$ with a translate of $K_y$ leaving $\xi_0$ outside. This means that $\conv_K X$ fails to contain $\xi_0$ when any one of the points of $X$ is dropped.

\begin{remark}
The above example shows that the Carath\'eodory number is not bounded from above by a constant independent of the body $K\subset\mathbb R^3$. It can be modified by considering an infinite sequence 
$$
\xi_{\pm k} = (\pm (2 - 1/k), 0, (2 - 1/k)^2),
$$ 
 showing that for a particular body $K$ the Carath\'eodory number can be infinite. 
\end{remark}

To conclude this discussion, we ask:

\begin{question}
Is the existence of a finite Carath\'eodory number for $K$-strong convexity equivalent to the \emph{generating} property of $K$? 
\end{question}

\begin{question}
Is the property that the Carath\'eodory number for $K$-strong convexity equals $n+1$ equivalent to the \emph{generating} property of $K\subset\mathbb R^n$?
\end{question}

\section{Topological criterion for Minkowski summand}
\label{section:topo}

The crucial topological tool used in the proofs of Theorems \ref{theorem:colorfulcovering}, \ref{theorem:colorfulcovering2}, and \ref{theorem:strong-caratheodory} was Lemma \ref{lemma:acyclic}. In this section we investigate whether its converse also holds. Note that Lemma~\ref{lemma:acyclic} can be reformulated as follows: Assume a convex body $A\subset\mathbb R^n$ is a Minkowski summand of another convex body $B\subset\mathbb R^n$; then for every vector $t\in\mathbb R^n$ the set $(A+t)\setminus B$ is either empty or acyclic. In this section we are going to obtain a certain inverse theorem.





The crucial fact that we need is the following:

\begin{lemma}[Vietoris--Begle]
\label{lemma:leray-map}
Let $f : X\to Y$ be a proper continuous map between metric spaces such that for every $y\in Y$, the fiber $f^{-1}(y)$ is acyclic. Then $f$ induces an isomorphism of the \v{C}ech cohomology of $X$ and $Y$.
\end{lemma}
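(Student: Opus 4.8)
The plan is to invoke the Leray spectral sequence of the map $f$, which is available in the sheaf-theoretic framework we have already adopted (recall that on our paracompact metric spaces \v{C}ech cohomology agrees with sheaf cohomology by Theorem 5.10.1 of \cite{godement1958}). For the constant sheaf $\mathbb Z$ on $X$ this spectral sequence reads
$$
E_2^{p,q} = H^p\bigl(Y; R^q f_* \mathbb Z\bigr) \Longrightarrow H^{p+q}(X),
$$
where $R^q f_* \mathbb Z$ denotes the $q$-th higher direct image sheaf on $Y$. The entire argument then reduces to computing these sheaves.

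The key step is to identify the stalks of $R^q f_* \mathbb Z$ with the cohomology of the fibers. The stalk at a point $y \in Y$ is the direct limit
$$
(R^q f_* \mathbb Z)_y = \varinjlim_{U\ni y} H^q\bigl(f^{-1}(U)\bigr)
$$
taken over open neighborhoods $U$ of $y$. Since $f$ is proper (and hence closed, as $Y$ is first countable), each fiber $f^{-1}(y)$ is compact and the preimages $\{f^{-1}(U)\}_{U\ni y}$ form a neighborhood basis of it; indeed, for any open $V\supseteq f^{-1}(y)$ the set $U = Y\setminus f(X\setminus V)$ is an open neighborhood of $y$ with $f^{-1}(U)\subseteq V$. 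The continuity property of \v{C}ech cohomology, which we use throughout, then lets us evaluate this limit as $H^q\bigl(f^{-1}(y)\bigr)$. This is exactly the proper base change isomorphism. By the hypothesis that every fiber is acyclic we obtain
$$
R^0 f_* \mathbb Z = \mathbb Z_Y, \qquad R^q f_* \mathbb Z = 0 \text{ for } q\ge 1,
$$
where $\mathbb Z_Y$ is the constant sheaf on $Y$.

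With all higher direct images vanishing, the spectral sequence degenerates: the only nonzero row is $q=0$, so $E_2^{p,0} = H^p(Y; \mathbb Z_Y) = H^p(Y)$ survives to the limit and all differentials vanish. Hence $H^p(X) \cong H^p(Y)$ for every $p$. Finally, the edge homomorphism $E_2^{p,0} \to H^p(X)$ of the Leray spectral sequence is precisely the pullback $f^*$, so the isomorphism is the one induced by $f$, as claimed.

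I expect the main obstacle to be the proper base change identification of the stalk $(R^q f_* \mathbb Z)_y$ with the fiber cohomology $H^q(f^{-1}(y))$. This is where properness of $f$ is indispensable: it guarantees that the fibers are compact and that $\{f^{-1}(U)\}_{U\ni y}$ is a cofinal system of neighborhoods of $f^{-1}(y)$, so that the continuity property of \v{C}ech cohomology applies. Without properness the direct limit of the $H^q(f^{-1}(U))$ need not compute the fiber cohomology, and the conclusion can fail. The remaining steps—constructing the spectral sequence and reading off the edge map—are entirely standard once one works, as we do, in the setting of sheaf cohomology.
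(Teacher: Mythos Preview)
Your proof is correct and follows essentially the same route as the paper: both arguments invoke the Leray spectral sequence for $f$, use the continuity property of \v{C}ech cohomology together with properness to identify the stalks of the higher direct image sheaves with the cohomology of the fibers, and then read off the isomorphism from the resulting degeneration. Your write-up is in fact more careful than the paper's sketch --- you spell out why properness gives a cofinal system of neighborhoods of the fiber and note that the edge homomorphism is $f^*$ --- and you have the roles of $X$ and $Y$ in the $E_2$-term and the abutment the right way round, whereas the paper's display appears to have them transposed.
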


This is a well-known fact, see~\cite{begle1950,begle1955}; but we sketch the proof for reader's convenience. We remind the reader once again, that in our situation \v{C}ech cohomology is the same as sheaf cohomology.

\begin{proof}
Consider the Leray spectral sequence for the direct image of a sheaf with~\cite[Theorem 4.17.1]{godement1958} 
$$
E_2^{p,q} = H^p(X; \mathcal H^q(f)), 
$$
where the sheaf $\mathcal H^q(f)$ is generated by the presheaf $U\mapsto H^q(f^{-1}(U))$. This spectral sequence converges to $H^*(Y)$. But our assumption on the fibers together with the cohomology continuity property shows that the sheafs $\mathcal H^q(f)$ are trivial for $q>0$, and $\mathcal H^0(f)$ is the constant sheaf $\mathbb Z$. Hence in this spectral sequence $E_2 = H^*(X)$ and its differentials vanish, and therefore $H^*(Y) = E_\infty = H^*(X)$.
\end{proof}

Now we state the result:

\begin{theorem}
\label{theorem:acyclic}
Let $A$ be a convex body in $\mathbb R^n$ and $B$ be a convex open bounded set in $\mathbb R^n$. Assume that, for any vector $t\in\mathbb R^n$, the set $(A+t)\setminus B$ is either empty or acyclic. Then $A$ is a Minkowski summand of $B$.
\end{theorem}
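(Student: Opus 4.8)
The plan is to pass to support functions and argue by contraposition. Writing $s_B$ for the support function of the closure $\overline B$, recall that $A$ is a Minkowski summand of $\overline B$ precisely when the positively $1$-homogeneous function
$$
g(u) = s_B(u) - s_A(u)
$$
is itself a support function, i.e. when $g$ is convex (sublinear); in that case the summand is exactly the erosion $B\gdiv A=\bigcap_{a\in A}(B-a)$ and one has $A+\overline{(B\gdiv A)}=\overline B$. Thus it suffices to prove the contrapositive: if $g$ is \emph{not} convex, then some translate produces a set $(A+t)\setminus B$ which is nonempty and \emph{not} acyclic, contradicting the hypothesis. Since a nonempty set with at least two connected components already fails to be acyclic (its $H^0$ is not $\mathbb Z$), it is enough to arrange a \emph{disconnected} difference set.

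The geometric mechanism I would use mirrors the proof of Lemma~\ref{lemma:acyclic}. For a translate $A+t$ not contained in $\overline B$, consider the \emph{excess set} of outer normals
$$
W_t = \{u\in\mathbb S^{n-1} : s_{A+t}(u) > s_B(u)\} = \{u\in\mathbb S^{n-1} : \langle t,u\rangle > g(u)\},
$$
i.e. the directions in which $A+t$ protrudes beyond $B$. Exactly as in Lemma~\ref{lemma:acyclic}, where the excess function $\langle t,\cdot\rangle - g$ was concave because $g$ was the support function of a summand, forcing $W_t$ to be a convex cone, one can emanate a system of rays from the faces of $A+t$ in the directions of $W_t$ and retract $(A+t)\setminus B$ onto this face set. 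I would make this rigorous using the Vietoris--Begle Lemma~\ref{lemma:leray-map}: the radial (nearest-point) correspondence defines a proper map from $(A+t)\setminus B$, viewed as a closed subset of $\mathbb R^n\setminus B$ as in Section~\ref{section:proof1}, onto $\overline{W_t}$, whose fibers are segments and hence acyclic. Consequently $(A+t)\setminus B$ and $W_t$ share their \v{C}ech cohomology, so $(A+t)\setminus B$ is acyclic if and only if $W_t$ is.

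It then remains to exploit the non-convexity of $g$ to choose $t$ with $W_t$ disconnected. Since a positively $1$-homogeneous function is convex if and only if it is subadditive, non-convexity gives linearly independent $u_1,u_2$ with $g(u_1+u_2) > g(u_1)+g(u_2)$. Choosing a linear functional $\langle t,\cdot\rangle$ whose values at $u_1,u_2$ lie just above $g(u_1),g(u_2)$ forces, by linearity, $\langle t,u_1+u_2\rangle$ to still lie below $g(u_1+u_2)$; hence $u_1,u_2\in W_t$ while the direction $u_1+u_2$ is excluded, so the geodesic joining them must leave $W_t$. I expect the main obstacle to be precisely the promotion of this local failure to a \emph{global} disconnection of $W_t$ on the whole sphere: a single linear functional has connected superlevel caps, so one must combine the non-convexity of $g$ with a careful choice of $t$ (reducing to the plane $\mathrm{span}(u_1,u_2)$, where a non-convex $1$-homogeneous function is necessarily cut by some line into two separated arcs, and then controlling the off-plane directions so that they cannot reconnect these arcs) to guarantee that $\{u : \langle t,u\rangle > g(u)\}$ splits into at least two components. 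Once this is achieved, $(A+t)\setminus B$ is disconnected and therefore non-acyclic, contradicting the hypothesis and showing that $g$ must be convex, i.e. that $A$ is a Minkowski summand of $B$.
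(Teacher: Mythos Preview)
Your approach is genuinely different from the paper's, but it is not a complete proof: you acknowledge the main gap yourself, and there is a second gap you pass over too quickly.

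\medskip

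\textbf{The paper's argument.} The paper does \emph{not} argue by contraposition or via support functions. It works directly with the incidence set
\[
Z=\{(x,t)\in\mathbb R^n\times\mathbb R^n : x\in (A+t)\setminus B\},
\]
applies the Vietoris--Begle lemma to \emph{both} coordinate projections of $Z$ (the fibers of $\pi_1$ are translates of $-A$, those of $\pi_2$ are exactly the sets $(A+t)\setminus B$, acyclic by hypothesis), and reads off first that $C=B\gdiv A$ is nonempty, and then---by restricting to $t$ in an $\varepsilon$-collar of $C$ and composing with a radial projection to $\mathbb S^{n-1}$---that every boundary point of $B$ lies in some $A+t$ with $t\in\cl C$. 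This forces $A+C=B$.

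\medskip

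\textbf{First gap: the reduction to $W_t$.} You assert that the ``radial (nearest-point) correspondence'' gives a proper map $(A+t)\setminus B\to\overline{W_t}$ with segment fibers, and invoke Vietoris--Begle. But this is exactly the step that, in Lemma~\ref{lemma:acyclic}, required the summand relation $K\subset K+T$ together with a smoothing/strict-convexity reduction to make the ray system well-defined and non-intersecting. Here there is no summand relation between $A+t$ and $B$ (indeed, you are in the contrapositive, so $A$ is \emph{not} a summand), $A$ may have corners, and it is not clear what the map is, let alone that its fibers are segments. This needs a genuine argument, not an analogy.

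\medskip

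\textbf{Second gap: disconnecting $W_t$.} You correctly identify this as the main obstacle, and you do not resolve it. The local observation that the great-circle arc from $u_1$ to $u_2$ leaves $W_t$ says nothing about other paths on $\mathbb S^{n-1}$ for $n\ge 3$, and even for $n=2$ the ``long'' arc may lie inside $W_t$. Your parenthetical plan (restrict to $\mathrm{span}(u_1,u_2)$, then ``control the off-plane directions'') is not an argument. Worse, it is not clear that \emph{disconnection} is always attainable: the contrapositive of the theorem only promises that some $(A+t)\setminus B$ is non-acyclic, and in dimensions $\ge 3$ this could manifest as nontrivial higher cohomology of a connected $W_t$ rather than as a failure of $H^0$. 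So your strategy may be aiming at the wrong invariant.

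\medskip

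In short, the paper's incidence-variety argument sidesteps both difficulties: it never needs to compare $(A+t)\setminus B$ with a set of directions, and it never needs to produce a specific bad translate.
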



\begin{proof}
Put $C = B\gdiv A$, we need to demonstrate that $A+C = B$. Evidently, $C$ is a convex open set.

First, we need to show that $C$ is not empty, that is, a translate of $A$ is contained in $B$. Put
$$
Z = \{(x, t) \in \mathbb R^n\times \mathbb R^n: x\in (A+t)\setminus B\}.
$$
Observe that the projection $\pi_1(Z)$ to the first factor is $\mathbb R^n\setminus B$, while its projection $\pi_2(Z)$ to the second factor is $\mathbb R^n\setminus C$. 

The fiber of the first projection is $\pi_1^{-1}(x) = \{x\}\times (x - A)$, it is compact, convex, and therefore acyclic; the fiber of the second projection $\pi_2^{-1}(t) = ((A+t)\setminus B)\times \{t\}$ is compact and acyclic by the hypothesis. Also, it is easy to check by definition that both projections of $Z$ are proper maps. Thus Lemma~\ref{lemma:leray-map} applies to both projections and implies that both projections induce isomorphisms in cohomology. Since 
$H^{n-1}(\mathbb R^n\setminus B) = \mathbb Z$, then we must have $H^{n-1}(\mathbb R^n\setminus C) = \mathbb Z$. Therefore the set $C$ must be nonempty.

Now we continue to work with $Z$ and consider its subset 
$$
Z(\varepsilon) = \{(x,t)\in Z : \dist (t, C) < \varepsilon)\}.
$$
Obviously $\pi_2(Z(\varepsilon))$ is $C_\varepsilon\setminus C$; therefore its cohomology is the same as the cohomology of an $(n-1)$-dimensional sphere, that is $H^{n-1}(C_\varepsilon\setminus C) = \mathbb Z$. Lemma~\ref{lemma:leray-map} concludes that $H^{n-1}(Z(\varepsilon)) = \mathbb Z$ also.

Consider the commutative diagram:
$$
\begin{CD}
Z(\varepsilon) @>>> Z\\
@VV{\pi_2}V @VV{\pi_2}V\\
C_\varepsilon\setminus C @>>> \mathbb R^n\setminus C,
\end{CD}
$$
where the horizontal arrows are inclusions. Since the vertical arrows and the lower horizontal arrow induce isomorphisms in cohomology, the same is true for the upper horizontal arrow.

Now take a point $o$ in the interior of $B$ and consider the central projection to the unit sphere centered at $o$
$$
\phi : \mathbb R^n\setminus B \to \mathbb S^{n-1}.
$$
Note that $\phi$ obviously induces isomorphism in the cohomology. Hence the composition
$$
\begin{CD}
Z(\varepsilon) @>>> Z @>{\pi_2}>> \mathbb R^n\setminus B @>{\phi}>> \mathbb S^{n-1},
\end{CD}
$$
which we denote by $\psi$, induces an isomorphism $\psi^* : H^{n-1}(\mathbb S^{n-1})\to H^{n-1}(Z(\varepsilon))$. This implies $\psi$ must be surjective, because an inclusion $\mathbb S^{n-1}\setminus \nu \to \mathbb S^{n-1}$ (for any $\nu\in \mathbb S^{n-1}$) induces a zero map in $(n-1)$-dimensional cohomology.

Now let us decode the geometric meaning of what we have proved: For any unit vector $\nu$ the ray 
$$
\rho_\nu = \{o+\lambda \nu : \lambda \ge 0\}
$$ 
intersects the set $(A+t)\setminus B$ for some $t\in C_\varepsilon$. Now we let $\varepsilon$ tend to $0$ and use the compactness to conclude that any $\rho_\nu\cap \partial B$ is contained in $A+t$ for some $t$ in $\cl C$, the closure of $C$. Since any point in $\partial B$ is $\rho_\nu\cap \partial B$ for a suitable $\nu$, we conclude that the convex set $A+\cl C$ contains the boundary of $B$ and therefore contains $\cl B$. From this it is easy to see that $A+C$ must coincide with the whole set $B$.
\end{proof}

To conclude let us put forth two conjectures. Lemma~\ref{lemma:acyclic} asserts that $A\setminus (A+B)$ is either empty or acyclic. Can one make an analogous assertion concerning the set $(A+B)\setminus A$?

\begin{conjecture}
Let $A$ and $B$ be convex bodies in $\mathbb R^n$. Then the set  $(A+B)\setminus A$ is either empty, or acyclic, or has homology of a sphere of dimension $k\in [0, n-1]$.
\end{conjecture}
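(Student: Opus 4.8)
Here is the approach I would take. Throughout write $S = A+B$ and $h_A(u) = \max_{a\in A}\langle a,u\rangle$ for the support function. The whole strategy is to strip the set $(A+B)\setminus A$ down to a subset of the sphere $\partial S$, identify the complement we delete with a \emph{normal cone cap} of $B$ by a double application of the Vietoris--Begle Lemma~\ref{lemma:leray-map}, and then read off the homology by Alexander duality (in \v{C}ech cohomology, which is exactly what that duality and Lemma~\ref{lemma:leray-map} require).

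\emph{Step 1: radial retraction to the boundary sphere.} I may pass to $\mathrm{aff}(S)$ and assume $S$ has nonempty interior, and I treat the main case $0\in B$ (equivalently $A\subseteq S$); the remaining cases $0\notin B$ and $A\subseteq\partial S$ I expect to be easier and to land directly in the ``empty or acyclic'' alternative, and I would dispatch them separately. Choosing a point $p\in\mathrm{relint}\,A\cap\inte S$, the Minkowski gauge $\gamma_A$ centered at $p$ is non\-decreasing along every ray from $p$ (with value $+\infty$ on directions leaving $\mathrm{aff}\,A$). Hence pushing each $x\in S\setminus A$ radially outward to $\partial S$ keeps $\gamma_A>1$, giving a deformation retraction of $(A+B)\setminus A$ onto $\partial S\setminus A$. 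Central projection from $p$ is a homeomorphism $\partial S\cong\mathbb S^{n-1}$, so $(A+B)\setminus A$ is homotopy equivalent to $\mathbb S^{n-1}\setminus\kappa$, where $\kappa$ is the compact set $A\cap\partial S$ transported to the sphere.

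\emph{Step 2: identifying $\kappa$ with a normal cone cap.} Using $h_S = h_A+h_B$ and $0\in B$ (so $h_B\ge 0$), a short support-function computation shows that $a\in A\cap\partial S$ if and only if there is a unit vector $u$ with $h_B(u)=0$ and $\langle a,u\rangle = h_A(u)$; that is, $\kappa=\bigcup_{u\in N}F_A(u)$, where $N=\{u\in\mathbb S^{n-1}:h_B(u)=0\}=N_B(0)\cap\mathbb S^{n-1}$ and $F_A(u)=\{a\in A:\langle a,u\rangle=h_A(u)\}$. I would then form the incidence space $\tilde\kappa=\{(a,u): u\in N,\ a\in F_A(u)\}$, which is compact, and study its two projections (both proper). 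The fiber of the projection to $A$-coordinate is the face $F_A(u)$, a nonempty compact convex set, hence acyclic, so Lemma~\ref{lemma:leray-map} gives $H^*(\tilde\kappa)\cong H^*(N)$. The fiber of the projection to $\kappa$ over $a$ is $N_A(a)\cap N$, the trace on the sphere of the convex cone $N_A(a)\cap N_B(0)$.

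\emph{Step 3: the classification, and the duality.} The key point — and the main obstacle — is that this last fiber is acyclic for every $a\in\kappa$. As the trace of a convex cone on the sphere, it is contractible unless the cone has nonzero lineality, i.e.\ unless it contains an antipodal pair $\pm u$. But $\pm u\in N$ forces $h_B(\pm u)=0$, so $B\subseteq u^\perp$, while $\pm u\in N_A(a)$ forces $a\in F_A(u)\cap F_A(-u)$, so $A$ lies in a single hyperplane orthogonal to $u$; together these make $S=A+B$ lie in an affine hyperplane, contradicting $\inte S\neq\emptyset$ in $\mathrm{aff}(S)$. Hence the cone is pointed, its spherical trace is contractible (and nonempty for $a\in\kappa$), so Lemma~\ref{lemma:leray-map} applies to the second projection as well and yields $H^*(\kappa)\cong H^*(\tilde\kappa)\cong H^*(N)$. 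Finally $N=N_B(0)\cap\mathbb S^{n-1}$ is the trace of a closed convex cone, so it is empty (when $0\in\inte B$, giving $\kappa=\emptyset$ and answer $\mathbb S^{n-1}$), a great subsphere $\mathbb S^{\ell}$ (when $N_B(0)$ is a linear subspace, e.g.\ $0$ in the relative interior of a lower-dimensional $B$), or contractible. Alexander duality on the sphere, $\tilde H_i(\mathbb S^{n-1}\setminus\kappa)\cong\tilde H^{\,n-2-i}(\kappa)\cong\tilde H^{\,n-2-i}(N)$, then delivers exactly the trichotomy of the conjecture: the set is empty, acyclic, or has the homology of $\mathbb S^{\,n-2-\ell}$ with $n-2-\ell\in[0,n-1]$. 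Besides the degenerate positions, the delicate point to write out carefully is precisely the antipodal-pair exclusion guaranteeing acyclic fibers over $\kappa$.
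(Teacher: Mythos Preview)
This statement is posed in the paper as an \emph{open conjecture}; the paper gives no proof, so there is nothing to compare your argument against. That said, your main-case outline (the case $0\in B$ and $\text{relint}\,A\cap\inte S\neq\emptyset$) is a coherent and promising attack: the radial retraction of $S\setminus A$ onto $\partial S\setminus A$, the incidence variety $\tilde\kappa$ over $N=N_B(0)\cap\mathbb S^{n-1}$, the two applications of Lemma~\ref{lemma:leray-map}, and the antipodal-exclusion argument showing $N_A(a)\cap N_B(0)$ is pointed all look correct. (One slip: in Step~2 you write ``projection to $A$-coordinate'' when you mean the projection $\tilde\kappa\to N$; the fiber over $u$ is $F_A(u)$, and that is what gives $H^*(\tilde\kappa)\cong H^*(N)$.) The final Alexander-duality reading of the trichotomy for $N$ is also right. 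The side case $A\subseteq\partial S$ is indeed easy: then $A$ lies in a supporting hyperplane $\{\langle\cdot,u\rangle=h_S(u)\}$, and $S\setminus A$ is star-shaped from any interior point of $S$, hence contractible.

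The genuine gap is the case $0\notin B$, which you wave off as ``easier.'' Your Step~2 characterization of $\kappa=A\cap\partial S$ uses $h_B\ge 0$ in an essential way to get $h_B(u)=0$ and $a\in F_A(u)$; once $h_B$ can be negative you only get $h_B(u)\le 0$ and $\langle a,u\rangle=h_A(u)+h_B(u)$, which no longer says $a\in F_A(u)$. Consequently the clean fibering $\tilde\kappa\to N$ with convex fibers $F_A(u)$ breaks down, and one has to analyze $A\cap F_S(u)$ over the (different) index set $\{u:h_B(u)\le 0\}$ --- the fibers are still convex, but the base is no longer obviously the trace of a convex cone determined by $B$ alone, and the image of the other projection is not obviously $\kappa$ with acyclic fibers. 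There is no translation trick available here, since replacing $B$ by $B-b_0$ changes the set $(A+B)\setminus A$. So the $0\notin B$ case needs an argument of its own; your sketch does not yet supply one.
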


In Theorem \ref{theorem:acyclic} we needed $B$ to be open because it was convenient to have the sets $(A+t)\setminus B$ compact. Can one obtain the same conclusion assuming that $B$ is a convex body? 

\begin{conjecture}
Let $A$ and $B$ be convex bodies in $\mathbb{R}^n$. Assume that, for any vector $t\in \mathbb{R}^n$, the set $(A+t) \setminus B$ is either empty or acyclic. Then $A$ is a Minkowski summand of $B$.
\end{conjecture}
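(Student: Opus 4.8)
The plan is to deduce the closed-body case from the already established open-body case (Theorem~\ref{theorem:acyclic}) by an outer approximation of $B$, and then to isolate precisely the one point at which the present hypothesis is weaker than before.

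Fix an open ball $U$ centered at the origin and set $B_\epsilon = B + \epsilon U$ for $\epsilon>0$. Each $B_\epsilon$ is an open bounded convex set and $\bigcap_{\epsilon>0}B_\epsilon = B$. Suppose for the moment one can prove the following \emph{transfer statement}: if $(A+t)\setminus B$ is empty or acyclic for every $t$, then $(A+t)\setminus B_\epsilon$ is empty or acyclic for every $t$ and every $\epsilon>0$. Granting this, Theorem~\ref{theorem:acyclic} applies to each $B_\epsilon$ and yields $A + C_\epsilon = B_\epsilon$ with $C_\epsilon = B_\epsilon\gdiv A$ a nonempty convex set. The sets $\cl C_\epsilon$ are compact, convex, nested decreasing as $\epsilon\downarrow 0$, and nonempty, so $C := \bigcap_{\epsilon>0}\cl C_\epsilon$ is a nonempty convex compactum, and a limit-point argument identifies $C$ with $B\gdiv A = \{t: A+t\subseteq B\}$. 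Since Minkowski summation is continuous on compacta in the Hausdorff metric and $\cl B_\epsilon = A + \cl C_\epsilon \to A + C$ while $\cl B_\epsilon\to B$, we conclude $A + C = B$, i.e. $A$ is a Minkowski summand of $B$. This limiting step is routine.

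All the difficulty is therefore concentrated in the transfer statement, and this is the step I expect to be the main obstacle. It is not formal: the set $(A+t)\setminus B_\epsilon$ is a subset of the acyclic set $(A+t)\setminus B$, and a subset of an acyclic set need not be acyclic. Indeed $(A+t)\setminus B_\epsilon = P\setminus B_\epsilon$ with $P = A+t$ convex and $B_\epsilon$ open convex, and such a difference can be disconnected or carry nontrivial homology (as a ``plus sign'' configuration shows) precisely when $A$ fails to be a summand. One way to attack the transfer statement is by contradiction and translation: if $P_0\setminus B_\epsilon$ were non-acyclic for some $t_0$, one would try to translate $A$ in a direction increasing $\dist(\cdot,B)$ so as to break the thin collar $\{x: 0<\dist(x,B)<\epsilon\}$ through which the offending components or cycles of $P_0\setminus B_\epsilon$ are joined inside $P_0\setminus B$, thereby producing a translate $A+t_1$ with $(A+t_1)\setminus B$ non-acyclic, contradicting the hypothesis. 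Carrying this out rigorously requires controlling how the components and holes of $P\setminus B_\epsilon$ evolve under translation, which is where the real work lies.

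A second, more diagnostic route is to rerun the proof of Theorem~\ref{theorem:acyclic} verbatim with the closed $B$ in place of the open one, since this pinpoints the obstruction most sharply. Setting $Z = \{(x,t): x\in (A+t)\setminus B\}$, the first projection $\pi_1$ is still proper with compact convex (hence acyclic) fibers $\{x\}\times(x-A)$, so Lemma~\ref{lemma:leray-map} gives $H^*(Z)\cong H^*(\mathbb R^n\setminus B)\cong H^*(\mathbb{S}^{n-1})$ exactly as before. The fibers of the second projection $\pi_2$ are $(A+t)\setminus B$, which are acyclic by hypothesis; the only failure is that, for closed $B$, these fibers are no longer compact, so $\pi_2$ is not proper and Lemma~\ref{lemma:leray-map} cannot be applied to it. Thus the entire obstruction reduces to this single non-properness issue localized at $\partial B$, and a complete proof should come either from compactifying $Z$ (for instance by passing to the closed set $\{(x,t): x\in (A+t)\setminus\inte B\}$ and controlling the fibers over $\partial B$, which may themselves fail to be acyclic) or from a version of the Vietoris--Begle theorem adapted to non-proper maps whose behavior near $\partial B$ is suitably controlled.
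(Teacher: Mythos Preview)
The statement you are attempting is stated in the paper as an open \emph{conjecture}; the paper gives no proof. So there is nothing to compare against, and the relevant question is whether your proposal actually settles the conjecture.

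It does not. Your outline is a clear and accurate diagnosis of where the difficulty lies, but both routes you describe stop precisely at the hard point without resolving it. In the first route, the entire content of the conjecture is pushed into what you call the ``transfer statement'' (that acyclicity of $(A+t)\setminus B$ for all $t$ implies acyclicity of $(A+t)\setminus B_\epsilon$ for all $t$ and all $\epsilon>0$). You correctly observe that this is not formal, and your sketch of a contradiction-by-translation argument is a plausible heuristic, but you yourself say ``which is where the real work lies''; nothing in the proposal carries it out. Note also that the transfer statement is essentially equivalent in strength to the conjecture: if the conjecture holds then $A$ is a summand of $B$, hence of each $\cl B_\epsilon$, and the forward direction (Lemma~\ref{lemma:acyclic}) gives the acyclicity for $B_\epsilon$; conversely you have shown that the transfer statement plus Theorem~\ref{theorem:acyclic} plus a routine limit gives the conjecture. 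So you have not reduced the problem, only reformulated it.

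Your second route is likewise a correct localization of the obstruction---the non-properness of $\pi_2$ when $B$ is closed---but the two suggested fixes (compactify $Z$ and control the boundary fibers, or invoke a non-proper Vietoris--Begle theorem) are themselves open-ended. In particular, the boundary fibers $(A+t)\setminus\inte B$ for $t\in\partial(B\gdiv A)$ can genuinely fail to be acyclic even when $A$ is a summand of $B$ (think of $A=B$ a square and $t=0$: the fiber is $\partial B$, a topological circle), so the compactification approach cannot work by simply checking fiber acyclicity on the closure. Any argument will have to handle this boundary behavior in a more delicate way, and your proposal does not indicate how.
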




\end{document}